\newtheorem{thm}{Theorem}[section]
\newtheorem{cor}[thm]{Corollary}
\newtheorem{lem}[thm]{Lemma}
\newtheorem{prop}[thm]{Proposition}
\theoremstyle{definition}
\newtheorem{defn}[thm]{Definition}
\theoremstyle{remark}
\newtheorem{rem}[thm]{Remark}
\numberwithin{equation}{section}
\def\cb{\mathcal{B}}
\def\bc{{\mathbb C}}
\def\bm{{\mathbb M}}
\def\br{{\mathbb R}}
\def\a{\alpha}
\def\b{\beta}
\def\g{\gamma}  \def\G{\Gamma}
  \def\D{\Delta}
\def\l{\lambda}
\def\p{\psi}
\def\s{\sigma} 
\def\t{\tau}
\def\f{\varphi}
\def\tr{\mathop{\rm Tr}}
\def\id{{\bf 1}\!\!{\rm I}}
\def\fb{{\mathbf{f}}}
\def\ab{{\mathbf{a}}}
\def\bb{{\mathbf{b}}}
\def\cb{{\mathbf{c}}}
\def\eb{{\mathbf{e}}}
\def\pb{{\mathbf{p}}}
\def\bx{{\mathbf{b}}}
\def\rb{{\mathbf{r}}}
\def\db{{\mathbf{d}}}
\def\gbb{{\mathbf{g}}}
\def\D{\Delta}
\def\G{\Gamma}
\def\wb{{\mathbf{w}}}
\def\o{\otimes}
\def\a{\alpha}
\begin{document}
\title[Pure Quasi quantum quadratic operators]
{On pure quasi quantum quadratic operators of $\bm_2(\mathbb{C})$}

\author{Farrukh Mukhamedov}
\address{Farrukh Mukhamedov\\
 Department of Computational \& Theoretical Sciences\\
Faculty of Science, International Islamic University Malaysia\\
P.O. Box, 141, 25710, Kuantan\\
Pahang, Malaysia} \email{{\tt far75m@yandex.ru}, {\tt
farrukh\_m@iium.edu.my}}

\author{Abduaziz Abduganiev}
\address{Abduaziz Abduganiev\\
 Department of Computational \& Theoretical Sciences\\
Faculty of Science, International Islamic University Malaysia\\
P.O. Box, 141, 25710, Kuantan\\
Pahang, Malaysia} \email{{\tt azizi85@yandex.ru}}
\begin{abstract}

In the present paper we study quasi quantum quadratic operators
(q.q.o) acting on the algebra of $2\times 2$ matrices $\bm_2(\bc)$.
It is known that a channel is called pure if it sends pure states to
pure ones. In this papers, we introduce a weaker condition, called
$q$-purity, than purity of the channel. To study $q$-pure channels,
we concentrate ourselves to quasi q.q.o. acting on $\bm_2(\bc)$. We
describe all trace-preserving quasi q.q.o. on $\bm_2(\bc)$, which
allowed us to prove that if a trace-preserving symmetric quasi
q.q.o. such that the corresponding quadratic operator is linear,
then its $q$-purity implies its positivity. If a symmetric quasi
q.q.o. has a Haar state $\t$, then its corresponding quadratic
operator is nonlinear, and it is proved that such $q$-pure symmetric
quasi q.q.o. cannot be positive. We think that such a result will
allow to check whether a given mapping from $\bm_2(\bc)$ to
$\bm_2(\bc)\o\bm_2(\bc)$ is pure or not. On the other hand, our
study is related to construction of pure quantum nonlinear channels.
Moreover, it is also considered that nonlinear dynamics associated
with quasi pure q.q.o. may have differen kind of dynamics, i.e. it
may behave chaotically or trivially, respectively.

 \vskip 0.3cm \noindent {\it Mathematics Subject
Classification}: 46L35, 46L55, 46A37.
60J99.\\
{\it Key words}: quasi quantum quadratic operators; positive
operator; pure; symmetric.
\end{abstract}

\maketitle

\section{Introduction}

It is known that entanglement is one of the essential features of
quantum physics and is fundamental in modern quantum technologies
\cite{N}. One of the central problems in the entanglement theory is
the discrimination between separable and entangled states. There are
several tools which can be used for this purpose. The most general
consists in applying the theory of linear positive maps \cite{Paul}.
In these studies one of the goal is to construct a map from the
state space of a system to the state space of another system. In the
literature on quantum information and communication systems, such a
map is called a channel \cite{N}. Note that the concept of state in
a physical system is a powerful weapon to study the dynamical
behavior of that system. One of the important class of channels is
so-called pure ones, which map pure states to pure ones (see
\cite{ACK,AO}). For example, important examples of such kind of maps
are conjugation of automorphisms of given algebra. But, if a channel
acts from algebra to another one, then the description of pure
channels is a tricky job. Therefore, it would be interesting
characterize such kind of maps (or channels). Note that quantum
mutual entropy of such kind of maps can be calculated easier way
than others \cite{OP,OV}.

In the present paper we are going to describe pure quasi quantum
quadratic operators (see also \cite{MATA}). On the other hand, such
kind of operators define quadratic operators. We should stress that
quadratic dynamical systems have been proved to be a rich source of
analysis for the investigation of dynamical properties and modeling
in different domains, such as population dynamics \cite{Be,FG,HJ},
physics \cite{PL,UR}, economy \cite{D}, mathematics
\cite{HS,L1,U,V}. The problem of studying the behavior of
trajectories of quadratic stochastic operators was stated in
\cite{U}. The limit behavior and ergodic properties of trajectories
of such operators were studied in \cite{K,L1,L2,Ma,V}. However, such
kind of operators do not cover the case of quantum systems.
Therefore, in \cite{GM1,GM2} quantum quadratic operators acting on a
von Neumann algebra were defined and studied. Certain ergodic
properties of such operators were studied in \cite{M2,M3}. In those
papers basically dynamics of quadratic operators were defined
according to some recurrent rule (an analog of Kolmogorov-Chapman
equation) which makes a possibility to study asymptotic behaviors of
such operators. However, with a given quadratic operator one can
define also a non-linear operator whose dynamics (in non-commutative
setting) is not well studied yet. Some class of such kind of
operators defined on $M_2(\bc)$ has been studied in \cite{MA,MATA}.
Note that in \cite{MM0} another construction of nonlinear quantum
maps were suggested and some physical explanations of such nonlinear
quantum dynamics were discussed. In all these investigations, the
said quantum quadratic operators by definition are positive. But, in
general, to study the nonlinear dynamics the positivity of the
operator is strong condition. Therefore, in the present paper we are
going to introduce a weaker than the positivity, and corresponding
operators are called quasi quantum quadratic. In the paper we
concentrate ourselves to trace-preserving operators acting on
$\bm_2(\bc)$. Each such kind of operator defines a quadratic
operator acting on state space of $\bm_2(\bc)$. It is known that a
mapping is called {\it pure} if it sends pure states to pure ones.
In this papers, we introduce a weaker condition, called $q$-purity,
than purity of the mapping. To study $q$-pure channels, we
concentrate ourselves to quasi q.q.o. acting on $\bm_2(\bc)$.  We
call such an operator $q$-pure, if its corresponding quadratic
operator maps pure state to pure ones. We first describe all
trace-preserving quasi q.q.o. on $\bm_2(\bc)$, which allowed us to
describe all $q$-pure quadratic operators. Then we prove that if a
trace-preserving symmetric quasi q.q.o. such that the corresponding
quadratic operator is linear, then its $q$-purity implies its
positivity. Moreover, if a symmetric quasi q.q.o. has a Haar state
$\t$, then its corresponding quadratic operator is nonlinear, and it
is proved that such $q$-pure symmetric quasi q.q.o. cannot be
positive. We think that such a result will allow to check whether a
given mapping from $\bm_2(\bc)$ to $\bm_2(\bc)\o\bm_2(\bc)$ is pure
or not. On the other hand, our study is related to construction of
pure quantum nonlinear channels. Besides, it is also considered that
nonlinear dynamics associated with quasi pure q.q.o. may have
differen kind of dynamics, i.e. it may behave chaotically or
trivially, respectively.

\section{Preliminaries}

Let $B(H)$ be the set of linear bounded operators from a complex
Hilbert space $H$ to itself. By $B(H)\o B(H)$ we mean tensor
product of $B(H)$ into itself. In the sequel $\id$ means an
identity matrix. By $B(H)^*$ it is usually denoted the conjugate
space of $B(H)$. We recall that a linear functional $\f\in B(H)^*$
is called {\it positive} if $\f(x)\geq 0$ whenever $x\geq 0$. The
set of all positive linear functionals is denoted by $B(H)^*_+$. A
positive functional $\f$ is called {\it state} if $\f(\id)=1$. By
$S(B(H))$ we denote the set of all states defined on $B(H)$.

Let $\D:B(H)\to B(H)\o B(H)$ be a linear operator. Then $\D$
defines a conjugate operator $\D^*:(B(H)\o B(H))^* \to B(H)^*$ by
\begin{eqnarray*}
  \D^*(f)(x)=f(\D x), \ f\in(B(H)\o B(H))^*, \
  x\in B(H).
\end{eqnarray*}
One can define an operator $V_{\D}$ by
\begin{eqnarray*}
  V_{\D}(\varphi)=\D^*(\f\o\f), \ \f\in B(H)^*.
\end{eqnarray*}

 Let $U:B(H)\o B(H)\to B(H)\o B(H)$
be a linear operator such that $U(x\o y)=y\o x$ for all $x,y\in
\bm_2(\bc)$.

\begin{defn}\label{qqso} A linear operator $\D:B(H)\to B(H)\o B(H)$ is said to be
\begin{enumerate}
    \item[(a)] -- a {\it quasi quantum quadratic operator (quasi q.q.o)}  if it is
unital (i.e. $\D\id=\id\o\id$), *-preserving (i.e.
$\D(x^*)=\D(x)^*, \ \forall x\in B(H)$) and
\begin{eqnarray*}
  V_{\D}(\f)\in B(H)^*_+ \ \ \textrm{whenever} \
  \f\in B(H)^*_+;
\end{eqnarray*}
    \item[(b)] -- a {\it quantum quadratic
operator (q.q.o.)} if it is unital (i.e. $\D\id=\id\o\id$) and
positive ( i.e. $\D x\geq 0$ whenever $x\geq 0$);
    \item[(c)] -- a
{\it quantum convolution} if it is a q.q.o. and satisfies
coassociativity condition:
$$(\D\o
id)\circ\D=(id\o\D)\circ\D,$$ where $id$ is the identity operator
of $\bm_2(\bc)$;
\item[(d)] -- a {\it symmetric} if one has
$U\D=\D$.
\end{enumerate}
\end{defn}

One can see that if $\D$ is q.q.o. then it is a quasi q.q.o. A
state $h\in S(B(H))$ is called {\it a Haar state} for a quasi
q.q.o. $\D$ if for every $x\in B(H)$ one has
\begin{equation}\label{Haar}
(h\o id)\circ \D(x)=(id\o h)\circ\D(x)=h(x)\id.
\end{equation}

\begin{rem}\label{qg} Note that if a quantum convolution $\D$ on
$B(H)$ becomes a $*$-homomorphic map with a condition
$$
\overline{\textrm{Lin}}((\id\o
B(H))\D(B(H)))=\overline{\textrm{Lin}}((B(H)\o\id)\D(B(H)))=B(H)\o
B(H)
$$
then a pair $(B(H),\D)$ is called a {\it compact quantum group}
\cite{W,S}. It is known \cite{W} that for given any compact
quantum group there exists a unique Haar state w.r.t. $\D$.
\end{rem}

\begin{rem} In \cite{M2} it has been studied symmetric q.q.o., which was
called {\it quantum quadratic stochastic operator}.
\end{rem}

\begin{rem} We note that there is another approach to nonlinear quantum operators on $C^*$-algebras (see \cite{MM0}).
\end{rem}

Note that from unitality of $\Delta$ we conclude that for any
quasi q.q.o. $V_\D$ maps $S(B(H))$ into itself. In some literature
operator $V_\D$ is called {\it quadratic convolution} (see for
example \cite{FS}). In \cite{MATA} certain dynamical properties of
$V_\Delta$ associated with q.q.o. defined on $\bm_2(\bc)$ are
investigated. In \cite{MA} Kadison-Schwarz property of q.q.o. has
been studied.

In quantum information, pure channels play important role, which can
be defined as follows: a channel (i.e. positive and unital mapping)
$T:B(H_1)\to B(H_2)$ is called {\it pure} if for any pure state
$\f\in S(B(H_1))$ the state $T^*\f$ is also pure (see \cite{AO}). It
is essential to describe such channels. Of course, if $H_1=H_2$ then
one can see that automorphisms of $B(H_1)$ are examples of pure
channels. But, in general, the description of pure channels is a
tricky job.

Now let us assume that $\D$ be a pure q.q.o. Then for any pure
states $\f,\p\in S(B(H))$ one concludes that $\D^*(\f\o\p)$ is also
pure. In particularly, for any pure $\f\in S(B(H)$ we have
$\D^*(\f\o\f)$ is also pure. Note that the reverse is not true.
Therefore, in this paper we are going to define more weaker notion
than purity for quasi q.q.o.

\begin{defn}
  A quasi q.q.o. $\D$ is called {\it q-pure} if for any pure state $\f$ the state
  $V_{\D}(\f)$ is also pure.
\end{defn}

From this definition one can immediately see that purity of quasi
q.q.o. implies its $q$-purity.

\section{Quasi quantum quadratic operators on $\bm_2(\bc)$}

By $\bm_2(\bc)$ be an algebra of $2\times 2$ matrices over complex
field $\bc$. In this section we are going to describe quantum
quadratic operators on $\bm_2(\bc)$ as well as find necessary
conditions for such operators to satisfy the Kadison-Schwarz
property.

Recall \cite{BR} that the identity and Pauli matrices $\{ \id,
\sigma_1, \sigma_2, \sigma_3 \}$ form a basis for $\bm_2(\bc)$,
where
\begin{eqnarray*}
\sigma_1 = \left( \begin{array}{cc} 0 & 1 \\ 1 & 0 \end{array}
\right)~~ \sigma_2 = \left( \begin{array}{cc} 0 & -i \\ i & 0
\end{array} \right)~~ \sigma_3 = \left( \begin{array}{cc} 1 & 0 \\
0 & -1 \end{array} \right).
\end{eqnarray*}

In this basis every matrix $x\in\bm_2(\bc)$ can  be written as $x =
w_0\id + \wb{\bf \sigma}$ with $w_0\in\bc$, $\wb =(w_1,w_2,w_3)\in
\bc^3$, here $\wb\s=w_1\s_1+w_2\s_2+w_3\s_3$. In what follows, we
frequently use notation
$\overline{\wb}=(\overline{w_1},\overline{w_2},\overline{w_3})$.

\begin{lem}\label{m2}\cite{RSW} The following assertions hold true:
\begin{enumerate}
\item[(a)] $x$ is self-adjoint iff  $w_0,\wb$  are reals;
\item[(b)]$\tr(x) = 1$ iff $w_0 =0.5$, here $\tr$ is the trace of
a matrix $x$;
\item[(c)] $x> 0$ iff $\|\wb\|\leq w_0$, where
$\|\wb\|=\sqrt{|w_1|^2+|w_2|^2+|w_3|^2}$;
\item[(d)] A linear
functional $\f$ on $\bm_2(\bc)$ is a state iff it can be
represented by
\begin{equation}\label{state}
{\f}(w_0\id + \wb\sigma)=w_0+\langle\wb,{\mathbf{f}}\rangle, \ \
\end{equation}
where ${\mathbf{f}}=(f_1,f_2,f_3)\in\br^3$ such that
$\|{\mathbf{f}}\|\leq 1$. Here as before $\langle\cdot,\cdot\rangle$
stands for the scalar product in $\bc^3$. \item[(e)] A state $\f$ is
a pure if and only if $\|\fb\|=1$. So pure states can be seen as the
elements of unit sphere in $\br^3$.
\end{enumerate}
\end{lem}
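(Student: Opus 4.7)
The plan is to reduce every assertion to a coefficient comparison in the basis $\{\id,\sigma_1,\sigma_2,\sigma_3\}$, using the Pauli identities $\sigma_i^{*}=\sigma_i$, $\tr(\sigma_i)=0$, and $\sigma_i\sigma_j+\sigma_j\sigma_i=2\delta_{ij}\id$. For (a), applying the adjoint to $x=w_0\id+\wb\sigma$ yields $x^{*}=\overline{w_0}\id+\overline{\wb}\sigma$, so linear independence of the basis forces $w_0\in\br$ and $\wb\in\br^{3}$. Part (b) is immediate from $\tr(\id)=2$ and $\tr(\sigma_i)=0$, which give $\tr(x)=2w_0$.

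For (c), I would first invoke (a) to work with self-adjoint $x$, and then expand $x^{2}$. The anticommutation relations make the off-diagonal cross terms cancel in pairs and the diagonal ones contribute $\|\wb\|^{2}\id$, yielding
\begin{equation*}
x^{2}=\bigl(w_0^{2}+\|\wb\|^{2}\bigr)\id+2w_0\wb\sigma.
\end{equation*}
Combined with $\tr(x)=2w_0$, the characteristic polynomial of $x$ has roots $w_0\pm\|\wb\|$, so $x\geq 0$ is equivalent to the smaller eigenvalue being nonnegative, i.e.\ $\|\wb\|\leq w_0$.

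For (d), positivity of $\f$ gives $\f(x^{*})=\overline{\f(x)}$, hence $f_i:=\f(\sigma_i)\in\br$ and $\fb\in\br^{3}$; together with $\f(\id)=1$ this yields the stated formula. To verify $\|\fb\|\leq 1$, I would test $\f$ on the self-adjoint element $\id+\wb\sigma$ with unit $\wb\in\br^{3}$, which is positive by (c), to obtain $1+\langle\wb,\fb\rangle\geq 0$; choosing $\wb=-\fb/\|\fb\|$ (assuming $\fb\neq 0$) then gives $\|\fb\|\leq 1$. The converse direction uses Cauchy--Schwarz together with (c). Part (e) follows by convex geometry: the map $\f\mapsto\fb$ is an affine bijection between $S(\bm_2(\bc))$ and the closed unit ball of $\br^{3}$, whose extreme points form precisely the unit sphere, and pure states of a $C^{*}$-algebra coincide with extreme points of the state space.

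The step requiring the most care is (d), where one must combine positivity (to force $\fb$ to be real), normalization, and the explicit cone characterization from (c) simultaneously; the remaining parts reduce to direct algebraic manipulations once the Pauli basis identities are in hand.
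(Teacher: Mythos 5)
Your proposal is correct and complete. Note, however, that the paper offers no proof of this lemma to compare against: it is quoted verbatim from the reference [RSW] (Ruskai--Szarek--Werner). Your argument is the standard verification one would expect: coefficient comparison in the Pauli basis for (a) and (b); the identity $x^{2}=(w_0^{2}+\|\wb\|^{2})\id+2w_0\,\wb\sigma$ together with Cayley--Hamilton to read off the eigenvalues $w_0\pm\|\wb\|$ for (c); testing $\f$ on the positive elements $\id+\wb\sigma$ with $\|\wb\|=1$ (and Cauchy--Schwarz for the converse) for (d); and the identification of pure states with extreme points of the state space, hence with extreme points of the closed unit ball of $\br^{3}$ under the affine bijection $\f\mapsto\fb$, for (e). The only cosmetic caveat is that the lemma's ``$x>0$'' should be read as positive semidefiniteness, which is exactly how you treat it.
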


In the sequel we shall identify a state with a vector $\fb\in
\br^3$. By $\t$ we denote a normalized trace, i.e.
$\t(x)=\frac{1}{2}\tr(x)$, $x\in \bm_2(\bc)$.

Let  $\Delta:\bm_2(\bc)\rightarrow \bm_2(\bc) \otimes \bm_2(\bc)$ be
a quasi q.q.o. Then we write the operator $\Delta $ in terms of a
basis in $\bm_2(\bc)\o\bm_2(\bc)$ formed by the Pauli matrices.
Namely,
\begin{eqnarray}\label{D-bij}
&& \Delta \id=\id\otimes \id; \nonumber\\
&& \Delta (\sigma_i)=b_i(\id\otimes \id)+\overset{3}{\underset{j=1}{\sum
    }}b_{ji}^{(1)}(\id\otimes \sigma_j)+\overset{3}{\underset{j=1}{\sum
    }}b_{ji}^{(2)}(\sigma_j \otimes \id)+\overset{3}{\underset{m,l=1}{\sum
    }}b_{ml,i}(\sigma_m \otimes \sigma_l),
\end{eqnarray}
where $i=1,2,3$.

In general, a description of positive operators is one of the main
problems of quantum information. In the literature most tractable
maps are positive and trace-preserving ones, since such maps arise
naturally in quantum information theory (see \cite{N}).  Therefore,
in the sequel we shall restrict ourselves to trace-preserving quasi
q.q.o., i.e. $\t\o\t\circ\D=\t$. So, we would like to describe all
such kind of maps.

\begin{prop}\label{trace-pre}
Let $\D:\bm_2(\bc)\to \bm_2(\bc)\o\bm_2(\bc) $ be a trace-preserving
quasi q.q.o., then in \eqref{D-bij} one has $b_j=0$, and
$b^{(1)}_{ij}$, $b^{(2)}_{ij}$, $b_{ij,k}$ are real for every
$i,j,k\in\{1,2,3\}$. Moreover, $\D$ has the following form:
\begin{equation}\label{D3}
\D(x)=w_0\id\otimes\id+
\id\otimes\mathbf{B}^{(1)}\wb\cdot\s+\mathbf{B}^{(2)}\wb\cdot\s\otimes\id+
\sum_{m,l=1}^3\langle\bx_{ml},\overline{\wb}\rangle\sigma_m\otimes\sigma_l,
\end{equation}
where $x=w_0\id+\wb\s$, $\bx_{ml}=(b_{ml,1},b_{ml,2},b_{ml,3})$, and
$\mathbf{B}^{(k)}=(b_{ij}^{(k)})_{i,j=1}^3$, $k=1,2$. Here as before
$\langle\cdot,\cdot\rangle $ stands for the standard scalar product
in $\bc^3$.
\end{prop}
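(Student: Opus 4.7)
The plan is to extract the three claims (vanishing of $b_j$, reality of the remaining coefficients, and the matrix form \eqref{D3}) from the three pieces of structure already imposed on $\D$: trace-preservation, $*$-preservation, and linearity.

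First I would exploit the trace-preserving condition. Since $\t(\id)=1$ and $\t(\s_j)=0$ for $j=1,2,3$, applying $\t\o\t$ to each of the basis tensors $\id\o\id$, $\id\o\s_j$, $\s_j\o\id$, $\s_m\o\s_l$ (with $j,m,l\in\{1,2,3\}$) kills every term except $\id\o\id$. So \eqref{D-bij} gives
\begin{equation*}
(\t\o\t)\D(\s_i)=b_i.
\end{equation*}
Since $\D$ is trace-preserving, the left-hand side equals $\t(\s_i)=0$, which forces $b_i=0$ for $i=1,2,3$.

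Second I would use $*$-preservation. Each Pauli matrix is self-adjoint, so $\D(\s_i)=\D(\s_i^*)=\D(\s_i)^*$. The tensors $\id\o\id$, $\id\o\s_j$, $\s_j\o\id$, $\s_m\o\s_l$ are all self-adjoint and form a basis of $\bm_2(\bc)\o\bm_2(\bc)$; taking adjoints of \eqref{D-bij} and matching coefficients in this basis (using linear independence) therefore forces $\overline{b^{(1)}_{ji}}=b^{(1)}_{ji}$, $\overline{b^{(2)}_{ji}}=b^{(2)}_{ji}$ and $\overline{b_{ml,i}}=b_{ml,i}$, i.e.\ reality of all these coefficients.

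Third, to derive \eqref{D3} I would write an arbitrary $x=w_0\id+\wb\s=w_0\id+\sum_i w_i\s_i$ and use linearity:
\begin{equation*}
\D(x)=w_0\,\id\o\id+\sum_i w_i\D(\s_i).
\end{equation*}
Substituting the simplified \eqref{D-bij} (with $b_i=0$ and real coefficients) and then interchanging summation orders gives
\begin{equation*}
\sum_i w_i\sum_j b^{(1)}_{ji}(\id\o\s_j)=\sum_j\Bigl(\sum_i b^{(1)}_{ji}w_i\Bigr)(\id\o\s_j)=\id\o(\mathbf{B}^{(1)}\wb)\cdot\s,
\end{equation*}
and similarly the $\s_j\o\id$-terms collect to $(\mathbf{B}^{(2)}\wb)\cdot\s\o\id$. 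For the tensor-tensor part, $\sum_i w_i b_{ml,i}=\sum_i b_{ml,i}\,\overline{\overline{w_i}}=\langle\bx_{ml},\overline{\wb}\rangle$ by definition of the scalar product on $\bc^3$ (the reality of $b_{ml,i}$ is used here), yielding the last sum in \eqref{D3}.

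The argument is really just bookkeeping, so I expect no serious obstacle; the only point that demands care is making sure that the $16$ tensor basis elements are genuinely linearly independent so that the $*$-preservation step pins down each coefficient individually, and keeping the conjugate-linear convention of $\langle\cdot,\cdot\rangle$ consistent when rewriting $\sum_i b_{ml,i}w_i$ as $\langle\bx_{ml},\overline{\wb}\rangle$.
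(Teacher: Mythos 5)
Your proposal is correct and follows essentially the same route as the paper: apply $\t\o\t$ to $\D(\s_i)$ to force $b_i=0$, use $*$-preservation and linear independence of the Pauli tensor basis to get reality of the coefficients, and then collect terms by linearity to obtain \eqref{D3}. The only difference is the order of the first two steps and your (welcome) extra care about the conjugate in $\langle\bx_{ml},\overline{\wb}\rangle$, which the paper glosses over.
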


\begin{proof}
From the *-preserving condition we get
\begin{eqnarray*}
\Delta (\sigma_i^{*})&=&\overline{b_i}(\id\otimes
\id)+\overset{3}{\underset{j=1}{\sum
    }}\overline{b_{ji}^{(1)}}(\id\otimes \sigma_j)+\overset{3}{\underset{j=1}{\sum
    }}\overline{b_{ji}^{(2)}}(\sigma_j \otimes \id)+\overset{3}{\underset{m,l=1}{\sum
    }}\overline{b_{ml,i}}(\sigma_m \otimes \sigma_l).
\end{eqnarray*}
This yields that $b_i=\overline{b_i}$,
$b_{ji}^{(k)}=\overline{b_{ji}^{(k)}}$ ($k=1,2$) and
$b_{ml,i}=\overline{b_{ml,i}}$, i.e. all coefficients are real
numbers.

Using the trace-preserving condition one finds
\begin{eqnarray*}
\tau\o\tau(\D(\s_i))=b_i=\tau(\s_i).
\end{eqnarray*}
Therefore, $b_i=0, \ i=1,2,3.$ Hence, $\D$ has the following form
\begin{equation}\label{d11}
\Delta(\sigma_i)=\overset{3}{\underset{j=1}{\sum
    }}b_{ji}^{(1)}(\id\otimes \sigma_j)+\overset{3}{\underset{j=1}{\sum
    }}b_{ji}^{(2)}(\sigma_j \otimes \id)+\overset{3}{\underset{m,l=1}{\sum
    }}b_{ml,i}(\sigma_m \otimes \sigma_l), \ \ i=1,2,3.
\end{equation}

Denoting
\begin{equation}\label{b-ml}
\mathbf{B}^{(k)}=(b_{ij}^{(k)})_{i,j=1}^3,\ k=1,2, \ \ \ \ \
\bx_{ml}=(b_{ml,1},b_{ml,2},b_{ml,3})
\end{equation}
and taking any $x=w_0\id+\wb\sigma\in \bm_2(\bc)$, from \eqref{d11}
we immediately find \eqref{D3}. This completes the proof.
\end{proof}

One can rewrite \eqref{D3} as follows
\begin{equation}\label{D3-0}
\D(x)=\l\D_1(x)+(1-\l)\D_2(x),
\end{equation}
where
\begin{eqnarray}\label{D3-1}
&& \D_1(x)=w_0\id\otimes\id+
\frac{1}{\l}\sum_{m,l=1}^3\langle\bx_{ml},\overline{\wb}\rangle\sigma_m\otimes\sigma_l,
\\[2mm]
\label{D3-2} && \D_2(x)=w_0\id\otimes\id+
\frac{1}{1-\l}\bigg(\mathbf{B}^{(2)}\wb\cdot\s\otimes\id+\id\otimes\mathbf{B}^{(1)}\wb\cdot\s\bigg).
\end{eqnarray}

Now assume that $b_{ij,k}=0$ for all $i,j,k\in\{1,2,3\}$ and $\D$ is
$q$-pure symmetric quasi q.q.o. In this case, $\D$ has the following
form
\begin{eqnarray}\label{D31-2}
\D(w_0\id+\wb\s)=w_0\id\otimes\id+\mathbf{B}\wb\cdot\s\otimes\id+\id\otimes\mathbf{B}\wb\cdot\s.
\end{eqnarray}
Let us take any $\f\in S(\bm_{2}(\bc))$ and $\fb\in\br^3$ be the
corresponding vector. Then we find
\begin{equation*}
\f\o\f(\D(w_0\id+\wb\s))=w_0+2\langle
\mathbf{B}\wb,\fb\rangle=w_0+\langle\wb,2\mathbf{B}^*\fb\rangle
\end{equation*}
Hence, if $\f$ is pure, then $\|\fb\|=1$. Denoting
$\mathbf{U}=2\mathbf{B}^*$ and the $q$-purity of $\D$ yields that
$\|\mathbf{U}\fb\|=1$ for all $\fb$ with $\|\fb\|=1$. This means
that $\mathbf{U}$ is isometry, so $\|\mathbf{U}\|=1$, i.e.
$\|\mathbf{B}\|=1/2$. Consequently, one concludes that $\D$ is
$q$-pure if and only if $2\mathbf{B}$ is isometry.

Now we are interested, whether $q$-pure symmetric quasi q.q.o. is
positive. To answer to this question we need some auxiliary facts.

\begin{lem}\label{L1}
  Let
  $x=w_0\id\otimes\id+\wb\cdot\s\otimes\id+\id\otimes\rb\cdot\s$. Then
  the following statements hold true:
  \begin{enumerate}
    \item[(i)] $x$ is self-adjoint if and only if $w_0 \in
    \mathbb{R}$ and $\wb, \rb \in \mathbb{R}^3$;
    \item[(ii)] $x$ is positive if and only if $w_0>0$ and $\|\wb\|+\|\rb\|\leq w_0$.
  \end{enumerate}
\end{lem}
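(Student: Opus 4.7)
The plan is to reduce both parts to elementary spectral facts about the Pauli matrices. Throughout, write $A=\wb\cdot\s$ and $B=\rb\cdot\s$, so that $x=w_0(\id\otimes\id)+A\otimes\id+\id\otimes B$. For part~(i), recall that the sixteen tensor products $\{\s_\mu\otimes\s_\nu:\mu,\nu\in\{0,1,2,3\}\}$, with $\s_0=\id$, form a self-adjoint linear basis of $\bm_2(\bc)\otimes\bm_2(\bc)$. Taking adjoints merely conjugates the scalar coefficients, so by uniqueness of the expansion the equation $x=x^*$ forces $w_0$, $\wb$, $\rb$ to be real. This is essentially the reality argument already used in the proof of Proposition~\ref{trace-pre}.

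For part~(ii), positivity implies self-adjointness, so by~(i) I may take $w_0\in\br$ and $\wb,\rb\in\br^3$. Expanding using $\s_i\s_j=\d_{ij}\id+i\sum_k\e_{ijk}\s_k$ and antisymmetry of $\e_{ijk}$ yields $A^2=\|\wb\|^2\id$ and $B^2=\|\rb\|^2\id$, so $\mathrm{spec}(A)\subseteq\{\pm\|\wb\|\}$ and $\mathrm{spec}(B)\subseteq\{\pm\|\rb\|\}$. The self-adjoint operators $A\otimes\id$ and $\id\otimes B$ commute, hence admit a common orthonormal eigenbasis of $\bc^2\otimes\bc^2$, and the spectrum of their sum is exactly $\{\a\|\wb\|+\b\|\rb\|:\a,\b\in\{+1,-1\}\}$. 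Adding $w_0(\id\otimes\id)$ shifts every eigenvalue by $w_0$, so
\begin{equation*}
\min\mathrm{spec}(x)=w_0-\|\wb\|-\|\rb\|,
\end{equation*}
and $x\geq 0$ is equivalent to $\|\wb\|+\|\rb\|\leq w_0$ (which in particular forces $w_0\geq 0$, with strict inequality as soon as $x\neq 0$).

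The argument is essentially routine; the only care required is the degenerate case where $\wb$ or $\rb$ vanishes, in which the corresponding factor in the tensor sum is scalar and the four-element spectrum collapses, but the formula for $\min\mathrm{spec}(x)$ and hence the positivity criterion remain valid. I do not anticipate any serious obstacle beyond keeping track of this edge case.
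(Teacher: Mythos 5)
Your proof is correct, and both parts reach the same conclusion as the paper, but part (ii) travels a genuinely different road. The paper writes out $x$ as an explicit $4\times 4$ matrix in the standard basis and then simply asserts the four eigenvalues $w_0\pm\|\wb\|\pm\|\rb\|$ by direct computation, concluding that positivity amounts to nonnegativity of the smallest one, $w_0-\|\wb\|-\|\rb\|$. You instead observe that $(\wb\cdot\s)^2=\|\wb\|^2\id$, so $\mathrm{spec}(\wb\cdot\s)\subseteq\{\pm\|\wb\|\}$, and that $\wb\cdot\s\otimes\id$ and $\id\otimes\rb\cdot\s$ commute with a product eigenbasis $u_\a\otimes v_\b$, whence the spectrum of the tensor sum is exactly the set of pairwise sums; this recovers the same four eigenvalues without ever writing the matrix. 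Your route is cleaner and more robust: it makes the claimed eigenvalue list a one-line consequence of structure rather than an unverified computation, it handles the degenerate cases $\wb=0$ or $\rb=0$ transparently, and it would generalize verbatim to $w_0\id\otimes\id+A\otimes\id+\id\otimes B$ for arbitrary self-adjoint $A,B$ on any finite-dimensional spaces. The paper's version buys only concreteness (the explicit matrix is reused in spirit for the analogous computation \eqref{qqqo9} later in the text). One cosmetic point in your favour: you correctly note that positivity is equivalent to $\|\wb\|+\|\rb\|\leq w_0$ alone, with $w_0>0$ following automatically except in the trivial case $x=0$; the paper's statement and proof gloss over this distinction between positive and nonnegative smallest eigenvalue.
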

\begin{proof} (i). One can see that
    \begin{eqnarray*}
      x^* = \overline{w_0}
      \id\otimes\id+\overline{\wb}\cdot\s\otimes\id +
      \id\otimes\overline{\rb}\cdot\s
    \end{eqnarray*}
So, self adjointness $x$ implies $\overline{w_0}=w_0$,
    $\overline{\wb}=\wb$, $\overline{\rb}=\rb$.

(ii). Let $x$ be self-adjoint. Then from the definition of Pauli
matrices one finds
    \begin{eqnarray*}
      x=\left(
          \begin{array}{cccc}
            w_0+w_3+r_3 & w_1-iw_2 & r_1-ir_2 & 0 \\
            w_1+iw_2 & w_0-w_3+r_3 & 0 & r_1-ir_2 \\
            r_1+ir_2 & 0& w_0+w_3-r_3 & w_1-iw_2 \\
            0 & r_1+ir_2 & w_1+iw_2 & w_0-w_3-r_3 \\
          \end{array}
        \right)
    \end{eqnarray*}
It is easy to calculate that eigenvalues of last matrix are the
    followings
    \begin{eqnarray*}
&&      \l_1=w_0-\|\rb\|+\|\wb\|, \ \
      \l_2=w_0-\|\rb\|-\|\wb\|, \\[2mm]
  &&    \l_3=w_0+\|\rb\|+\|\wb\|, \ \
      \l_4=w_0+\|\rb\|-\|\wb\|
    \end{eqnarray*}

    So, we can conclude that $x$ is positive if and only if the smallest
    eigenvalue is positive. This means $w_0-\|\rb\|-\|\wb\|\geq0$,
    which completes the proof.
\end{proof}

\begin{prop}\label{Pos}
  The mapping $\D$ given by \eqref{D31-2} is positive if and only if
 $\|\mathbf{B}\|\leq 1/2$.
\end{prop}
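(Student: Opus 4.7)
The plan is to reduce the positivity question to a direct comparison of norms, using Lemma \ref{m2}(c) to parametrize positive elements of $\bm_2(\bc)$ and Lemma \ref{L1}(ii) to decide positivity of elements of the special form produced by $\Delta$.

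First, I would note that by Proposition \ref{trace-pre} all entries of the matrix $\mathbf{B}$ are real, so for any self-adjoint $x = w_0 \id + \wb \sigma$ (where $w_0 \in \br$, $\wb \in \br^3$) the image
\[
\D(x) = w_0\,\id\otimes\id + \mathbf{B}\wb\cdot\s \otimes \id + \id\otimes\mathbf{B}\wb\cdot\s
\]
has exactly the shape covered by Lemma \ref{L1}. Therefore $\D(x)$ is self-adjoint, and by Lemma \ref{L1}(ii), $\D(x)\geq 0$ is equivalent to $w_0 \geq 0$ together with
\[
\|\mathbf{B}\wb\| + \|\mathbf{B}\wb\| \leq w_0, \quad\text{i.e.}\quad 2\|\mathbf{B}\wb\| \leq w_0.
\]
Combined with Lemma \ref{m2}(c), which says that $x \geq 0$ is equivalent to $\|\wb\| \leq w_0$, positivity of $\D$ becomes the implication
\[
\|\wb\| \leq w_0 \;\Longrightarrow\; 2\|\mathbf{B}\wb\| \leq w_0 \quad\text{for all } w_0 \in \br,\ \wb \in \br^3.
\]

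For sufficiency, if $\|\mathbf{B}\| \leq 1/2$ and $\|\wb\| \leq w_0$, then $2\|\mathbf{B}\wb\| \leq 2\|\mathbf{B}\|\,\|\wb\| \leq \|\wb\| \leq w_0$, so the required inequality holds. For necessity, suppose $\D$ is positive. Taking any unit vector $\wb \in \br^3$ (so $\|\wb\|=1$) and $w_0 = 1$, the element $x = \id + \wb\sigma$ is positive by Lemma \ref{m2}(c), hence $2\|\mathbf{B}\wb\| \leq 1$ for every such $\wb$. Taking the supremum yields $\|\mathbf{B}\| \leq 1/2$.

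There is no real obstacle here — the proof is essentially a two-step reduction (one step using Lemma \ref{m2}(c), the other Lemma \ref{L1}(ii)). The only point requiring attention is to justify that it suffices to test positivity on self-adjoint $x$, which is automatic since any positive element is self-adjoint, and that the reality of the coefficients of $\mathbf{B}$ (from Proposition \ref{trace-pre}) keeps $\D(x)$ inside the hypothesis of Lemma \ref{L1}.
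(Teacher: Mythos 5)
Your proof is correct and follows essentially the same route as the paper: both reduce to a positive $x=w_0\id+\wb\s$ with $\|\wb\|\leq w_0$ (normalized to $w_0=1$) and then apply Lemma \ref{L1}(ii) to translate positivity of $\D(x)$ into $2\|\mathbf{B}\wb\|\leq w_0$. You simply spell out the two directions (the operator-norm estimate for sufficiency and the supremum over unit vectors for necessity) that the paper leaves implicit.
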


\begin{proof}
  Let $x=w_0\id+\wb\cdot\s$ be positive, i.e. $w_0>0,$ $\|\wb\|\leq
  w_0$. Without lost of generality we may assume $w_0=1.$
  Now Lemma $\ref{L1}$ yields that $\D(x)$ is positive if and only if
  $2\|\mathbf{B}\wb\|\leq1$. This competes the
  proof.
\end{proof}

From this Proposition and above made conclusions we immediately get
the following

\begin{thm}\label{P-q} Let $\D$ be given by
\eqref{D31-2}. Then the following statements hold true:
\begin{enumerate}
\item[(i)] $\D$ is quasi q.q.o. if and only if $\D$ is positive, i.e. $\|\mathbf{B}\|\leq 1/2$;
\item[(ii)]  $\D$ is $q$-pure if and only if $2\mathbf{B}$ is isometry. Moreover,
$\D$ is positive.
\end{enumerate}
\end{thm}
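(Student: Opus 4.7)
The plan is to assemble both assertions from Proposition~\ref{Pos} together with the Bloch-vector computation given in the paragraph just before the theorem. For part (i), I would first observe that the implication ``positive $\Rightarrow$ quasi q.q.o.''\ is immediate from the comment after Definition~\ref{qqso}, since a positive unital $\ast$-preserving map automatically satisfies $V_\D(\f)\in B(H)^*_+$ whenever $\f\in B(H)^*_+$. So the real work lies in the converse, and via Proposition~\ref{Pos} this reduces to checking the operator-norm bound $\|\mathbf{B}\|\leq 1/2$.

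For that bound I would reuse the identity
\begin{equation*}
V_\D(\f)(x)=(\f\o\f)\D(x)=w_0+\langle\wb,2\mathbf{B}^{\ast}\fb\rangle,
\end{equation*}
displayed just before the theorem, which says that under the parametrization of Lemma~\ref{m2}(d) the map $V_\D$ acts on states through the affine-linear rule $\fb\mapsto 2\mathbf{B}^{\ast}\fb$. Since unitality of $\D$ forces $V_\D$ to send $S(\bm_2(\bc))$ into itself, the vector $2\mathbf{B}^{\ast}\fb$ must lie in the closed unit ball of $\br^3$ whenever $\fb$ does. This yields $\|\mathbf{B}\|=\|\mathbf{B}^{\ast}\|\leq 1/2$, and Proposition~\ref{Pos} then closes (i).

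Part (ii) is a short tightening of the same argument: $\D$ is $q$-pure precisely when $2\mathbf{B}^{\ast}$ preserves the unit sphere of $\br^3$, i.e.\ when $2\mathbf{B}^{\ast}$ is an isometry. Because $\mathbf{B}$ is a real $3\times 3$ matrix by Proposition~\ref{trace-pre}, in finite dimensions this is equivalent to $2\mathbf{B}$ itself being an isometry (the conditions $A^{T}A=I$ and $AA^{T}=I$ coincide). In particular $\|\mathbf{B}\|=1/2$, and a second application of Proposition~\ref{Pos} then gives the ``Moreover, $\D$ is positive'' clause.

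The main conceptual point, and the only place where one might pause, is the converse in part (i): a priori ``quasi q.q.o.''\ is strictly weaker than positivity, but for maps of the form \eqref{D31-2} the action of $\D$ on the Bloch ball is governed by the single linear map $2\mathbf{B}^{\ast}$, so preservation of the Bloch ball (a state-space condition alone) already pins down the operator norm of $\mathbf{B}$, and Proposition~\ref{Pos} (equivalently Lemma~\ref{L1}) lifts this to full positivity on $\bm_2(\bc)$. Once this mechanism is in hand, both parts are essentially bookkeeping around Proposition~\ref{Pos}.
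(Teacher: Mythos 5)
Your proposal is correct and follows the paper's own route: the theorem is obtained exactly by combining the Bloch-vector identity $\f\o\f(\D(x))=w_0+\langle\wb,2\mathbf{B}^*\fb\rangle$ (so that $V_\D$ acts on state vectors by $\fb\mapsto 2\mathbf{B}^*\fb$, giving $\|\mathbf{B}\|\le 1/2$ for the quasi q.q.o.\ property and the isometry condition for $q$-purity) with Proposition~\ref{Pos}. The only stylistic nit is that sending $S(\bm_2(\bc))$ into itself uses the quasi q.q.o.\ hypothesis together with unitality, not unitality alone, but since you are proving that direction under that hypothesis this is harmless.
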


Note that using the methods of \cite{MA2} one may study
Kadison-Schwarz property of mappings given by \eqref{D31-2}. Now the
question is what about the case when $b_{ij,k}\neq 0$. Therefore,
the next section is devoted to this this question.

\section{Q-pure symmetric quasi quantum quadratic operators on $\bm_2(\bc)$}

In this section we are going to describe trace-preserving $q$-pure
symmetric quasi q.q.o.

Denote
\begin{eqnarray*}
 &&{\mathbf{D}}=\{\pb=(p_1,p_2,p_3)\in\mathbb{R}:p_1^2+p_2^2+p_3^2\leq 1\}, \\
 &&{\mathbf{S}}=\{\pb=(p_1,p_2,p_3)\in\mathbb{R}:p_1^2+p_2^2+p_3^2=1\}.
\end{eqnarray*}

Let $\Delta$ be a trace-preserving symmetric quasi q.q.o. on
$\bm_2(\bc)$. Then due to Lemma \ref{m2} (d) and Proposition
\ref{trace-pre} the functional $\Delta^{*}(\varphi\otimes \psi)$ is
a state if and only if the vector
$$
\textbf{f}_{\Delta^{*}(\varphi,
\psi)}=\bigg(\sum_{j=1}^3b_{j1}\big(p_j+f_j\big)+
\overset{3}{\underset{i,j=1}{\sum}}b_{ij,1}f_ip_j,
\sum_{j=1}^3b_{j2}\big(p_j+f_j\big)+\overset{3}{\underset{i,j=1}{\sum}}b_{ij,2}f_ip_j,
\sum_{j=1}^3b_{j3}\big(p_j+f_j\big)+\overset{3}{\underset{i,j=1}{\sum}}b_{ij,3}f_ip_j\bigg).
$$
satisfies $\|\fb_{\D^*(\f,\p)}\|\leq 1$.

Let us consider the quadratic operator, which is defined by
$V_\D(\f)=\D^*(\f\o\f)$, $\f\in S(\bm_{2}(\bc))$. From the last
expression we find that
\begin{eqnarray*}
V_\Delta(\varphi)(\s_k)=\sum_{j=1}^32b_{jk}f_j+\overset{3}{\underset{i,j=1}{\sum}}
b_{ij,k}f_if_j, \ \ \fb\in {\mathbf{D}}.
\end{eqnarray*}

This suggests us the consideration of a nonlinear operator
$V:{\mathbf{D}}\to {\mathbf{D}}$ defined by
\begin{equation}\label{V}
V(\textbf{f})_{k}=\sum_{j=1}^32b_{jk}f_j+\overset{3}{\underset{i,j=1}{\sum}}b_{ij,k}f_{i}f_{j},
  \ \ \  k=1,2,3.
\end{equation}
where $\fb=(f_1,f_2,f_3)\in \mathbf{D}$.

From the definition and Lemma \ref{m2} (e) we conclude that the
$\Delta$ is $q$-pure if and only if $V({\mathbf{S}})\subset
{\mathbf{S}}$.

{\sc Example.} Let us consider an example of pure symmetric quasi
q.q.o. Let
\begin{eqnarray*}
   \D_0(x)&=&w_0\id\o\id+w_1\s_1\o\s_2+w_1\s_2\o\s_1+w_2\s_1\o\s_1\\
   &&-w_2\s_2\o\s_2-w_2\s_3\o\s_3+w_3\s_1\o\s_3+
   w_3\s_3\o\s_1
   \end{eqnarray*}

Then the corresponding quadratic operator has the following form
\begin{eqnarray}\label{ex1}
  V_0(\fb)=\left\{\begin{array}{lll}
    2f_1f_2 \\
    f_1^2-f_2^2-f_3^2 \\
    2f_1f_3 \\
  \end{array}
  \right.
\end{eqnarray}
Let us show $V_0$ maps $\mathbf{S}$ to $\mathbf{S}$. Indeed, let
$\fb=(f_1,f_2,f_3)\in {\mathbf{S}}$, i.e. $f_1^2+f_2^2+f_3^2=1$.
Then we have
\begin{eqnarray*}
(2f_1f_2)^2+(f_1^2-f_2^2-f_3^2)^2+(2f_1f_3)^2&=&4f_1^2f_2^2+(2f_1^2-1)^2+4f_1^2f_3^2 \\
&=&4f_1^2(1-f_1^2-f_3^2)+4f_1^4-4f_1^2+1+4f_1^2f_3^2\\
&=&1
\end{eqnarray*}
This shows that $\Delta_0 $ is q-pure.\\

Now let us rewrite the quadratic operator $V$ (see \eqref{V}) as
follows
\begin{eqnarray}\label{qqqo1}
  V(\fb)=\left\{\begin{array}{ccc}
    a_1f_1^2+b_1f_2^2+c_1f_3^2+\a_1 f_1f_2+\b_1f_2f_3+\g_1f_1f_3+d_1f_1+e_1f_2+g_1f_3
    \\[2mm]
    a_2f_1^2+b_2f_2^2+c_2f_3^2+\a_2 f_1f_2+\b_2f_2f_3+\g_2f_1f_3+d_2f_1+e_2f_2+g_2f_3
    \\[2mm]
    a_3f_1^2+b_3f_2^2+c_3f_3^2+\a_3 f_1f_2+\b_3f_2f_3+\g_3f_1f_3+d_3f_1+e_3f_2+g_3f_3
  \end{array}
  \right.
\end{eqnarray}
where $\fb\in \mathbf{D}$.

\begin{thm}\label{qqqo2}
  The operator $V$ given by \eqref{qqqo1} maps ${\mathbf{S}}$ into itself if
  and only if  the followings hold true
  \begin{enumerate}
\item[(i)] $\|\ab\|^2+\|\db\|^2=1, \ \|\bb\|^2+\|\eb\|^2=1, \
\|\cb\|^2+\|\gbb\|^2=1;$

\item[(ii)]$\|A\|=\|\ab-\bb\|, \ \|\Gamma\|=\|\ab-\cb\|, \
\|B\|=\|\bb-\cb\|;$

\item[(iii)] $\langle\ab,\db\rangle=0, \ \langle\bb,\eb\rangle=0,
\ \langle\cb,\gbb\rangle=0;$

\item[(iv)] $\langle\ab,\G\rangle=\langle\cb,\G\rangle, \
\langle\bb,B\rangle=\langle\cb,B\rangle, \
\langle\ab,A\rangle=\langle\bb,A\rangle; $

\item[(v)] $\langle\cb,\G\rangle+\langle\db,\gbb\rangle=0, \
\langle\cb,B\rangle+\langle\eb,\gbb\rangle=0, \
\langle\cb,\db\rangle+\langle\G,\gbb\rangle=0, \\
\langle\cb,\eb\rangle+\langle B,\gbb\rangle=0, \
\langle\bb,\db\rangle+\langle A,\eb\rangle=0, \
\langle\bb,A\rangle+\langle\db,\eb\rangle=0, \\
\langle\bb,\gbb\rangle+\langle B,\eb\rangle=0, \
\langle\ab,\eb\rangle+\langle A,\db\rangle=0, \
\langle\ab,\gbb\rangle+\langle\G,\db\rangle=0;$

\item[(vi)] $\langle\ab,B\rangle-\langle\cb,B\rangle+\langle
A,\G\rangle=0, \ \langle\bb,\G\rangle-\langle\cb,\G\rangle+\langle
A,B\rangle=0, \\
\langle A,\gbb\rangle+\langle B,\db\rangle+\langle\G,\eb\rangle=0, \
\langle\cb,A\rangle+\langle\db,\eb\rangle+\langle B,\G\rangle=0,$
   \end{enumerate}
where $\ab=(a_1,a_2,a_3), \ \bb=(b_1,b_2,b_3), \ \cb=(c_1,c_2,c_3),
\ \db=(d_1,d_2,d_3), \ \eb=(e_1,e_2,e_3), \ \gbb=(g_1,g_2,g_3), \
\G=(\g_1,\g_2,\g_3), \ A=(\a_1,\a_2,\a_3), \ B=(\b_1,\b_2,\b_3).$
 \end{thm}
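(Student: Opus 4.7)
The plan is to rewrite the condition $V(\mathbf{S})\subset\mathbf{S}$ as the statement that the polynomial $\|V(\fb)\|^2-1$ is divisible by $f_1^2+f_2^2+f_3^2-1$ in $\mathbb{R}[f_1,f_2,f_3]$, and then to compare coefficients monomial by monomial. Since each coordinate $V_k(\fb)$ has no constant term, $\|V(\fb)\|^2$ splits into homogeneous parts $P_2+P_3+P_4$ of degrees $2$, $3$, $4$. Writing
\begin{equation*}
\|V(\fb)\|^2-1=\bigl(f_1^2+f_2^2+f_3^2-1\bigr)\bigl(R_0+R_1(\fb)+R_2(\fb)\bigr)
\end{equation*}
with $R_j$ homogeneous of degree $j$ and comparing degree by degree immediately forces $R_0=1$, $R_1\equiv 0$ and $R_2=(f_1^2+f_2^2+f_3^2)-P_2$; the remaining content is the pair of polynomial identities
\begin{equation*}
P_3(\fb)\equiv 0,\qquad P_4(\fb)=R_2(\fb)\,(f_1^2+f_2^2+f_3^2).
\end{equation*}

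I would dispatch the cubic identity first, as it is cleaner. Writing $V_k=V_k^{(1)}+V_k^{(2)}$ for the linear and quadratic parts of $V_k$, one has $P_3=2\sum_k V_k^{(1)}V_k^{(2)}$. Expanding and setting the coefficient of each of the ten cubic monomials to zero produces the three orthogonalities of (iii) (from $f_k^3$, $k=1,2,3$), the six mixed relations of (v) that pair one of $\db,\eb,\gbb$ with a quadratic-part vector (from $f_i^2 f_j$ with $i\neq j$), and the single three-term identity $\langle A,\gbb\rangle+\langle B,\db\rangle+\langle\G,\eb\rangle=0$ of (vi) (from $f_1f_2f_3$).

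The quartic identity $P_4=R_2(f_1^2+f_2^2+f_3^2)$ requires more bookkeeping but organizes into four blocks. The three $f_k^4$ coefficients reproduce the norm identities of (i). The three $f_i^2 f_j^2$ coefficients, after substituting (i), collapse to $\|A\|^2=\|\ab-\bb\|^2$ and its two cyclic analogues, which are (ii). The six coefficients from $f_i^3 f_j$ and $f_i f_j^3$ come in three pairs in which the two equations share the same right-hand side $-2\langle\db,\eb\rangle$ (or the analogue for the other index pairs); each pair is thus equivalent to one of the three identities of (iv) (its difference) together with one of the three remaining identities of (v). Finally, the three $f_i^2 f_j f_k$ coefficients produce three-term identities that, after eliminating $\langle\db,\eb\rangle,\langle\db,\gbb\rangle,\langle\eb,\gbb\rangle$ via the degree-$3$ entries of (v), reduce to the three remaining identities of (vi).

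The converse direction is essentially the same computation read backward: assuming (i)--(vi), one verifies that every matched pair of coefficients agrees, hence $P_3\equiv 0$ and $P_4=R_2(f_1^2+f_2^2+f_3^2)$, which reassembles the divisibility relation and shows $\|V(\fb)\|^2=1$ on $\mathbf{S}$. The main technical obstacle is the combinatorial bookkeeping in the degree-$4$ step, where each of the fifteen quartic monomials must be tracked in both $P_4=\sum_k(V_k^{(2)})^2$ and $R_2(f_1^2+f_2^2+f_3^2)$; the decisive simplification is to process the cubic identity first, since it not only supplies (iii), most of (v), and one of (vi) outright, but also allows the three-term structures of (vi) to emerge cleanly from the $f_i^2 f_j f_k$ comparisons after substituting the already-established relations.
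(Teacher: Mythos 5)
Your argument is correct and, at bottom, carries out the same computation as the paper --- both proofs expand $\|V(\fb)\|^2-1$ and reduce the inclusion $V(\mathbf{S})\subset\mathbf{S}$ to the vanishing of an explicit list of coefficients that is then matched against (i)--(vi) --- but you organize it by a genuinely different decomposition. The paper substitutes $f_3^2=1-f_1^2-f_2^2$ into the quadratic parts (its formula \eqref{qqqo4}) and then writes out one large polynomial identity in $f_1,f_2,f_3$ whose two dozen coefficients must vanish; you instead invoke divisibility of $\|V(\fb)\|^2-1$ by $f_1^2+f_2^2+f_3^2-1$ in $\br[f_1,f_2,f_3]$ and grade by degree, which splits the problem into the two clean identities $P_3\equiv0$ and $P_4=R_2\cdot(f_1^2+f_2^2+f_3^2)$. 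What this buys is a transparent accounting of which conditions come from where: (iii), six of the nine relations in (v), and one relation of (vi) are exactly the cubic conditions, while (i), (ii), (iv) and the remainder of (v) and (vi) are quartic. I checked the individual extractions you describe --- the three pairs $f_i^3f_j$, $f_if_j^3$ sharing the right-hand sides $-2\langle\db,\eb\rangle$, $-2\langle\db,\gbb\rangle$, $-2\langle\eb,\gbb\rangle$ (whence (iv) as the difference and the last three relations of (v) as either member), the $f_i^2f_j^2$ coefficients collapsing to (ii) once (i) is in hand, and the $f_i^2f_jf_k$ coefficients yielding the remaining three relations of (vi) after eliminating $\langle\db,\eb\rangle$, $\langle\db,\gbb\rangle$, $\langle\eb,\gbb\rangle$ via (v) --- and they are all correct, as is the converse read-back. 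The one point you should not leave implicit is the passage from ``$\|V(\fb)\|^2-1$ vanishes on $\mathbf{S}$'' to divisibility: it is standard but deserves a line, e.g.\ divide by $f_1^2+f_2^2+f_3^2-1$ viewed as a monic quadratic in $f_3$; the remainder $r_0(f_1,f_2)+r_1(f_1,f_2)f_3$ vanishes at $f_3=\pm\sqrt{1-f_1^2-f_2^2}$ for every $(f_1,f_2)$ in the open unit disc, hence $r_0=r_1=0$ identically.
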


 \begin{proof}
   "only if" part. It is enough to show
   \begin{eqnarray}\label{qqqo3}
     \big(V(\fb)_1\big)^2+\big(V(\fb)_2\big)^2+\big(V(\fb)_3\big)^2=1
   \end{eqnarray}
   for any $\fb=(f_1,f_2,f_3)$ with $f_1^2+f_2^2+f_3^2=1.$

Let us rewrite \eqref{qqqo1} as follows
\begin{eqnarray}\label{qqqo4}
  && V(\fb)=\left\{\begin{array}{ccc}
    (a_1-c_1)f_1^2+(b_1-c_1)f_2^2+c_1+\a_1 f_1f_2+\b_1f_2f_3+\g_1f_1f_3+d_1f_1+e_1f_2+g_1f_3
    \\[2mm]
    (a_2-c_2)f_1^2+(b_2-c_2)f_2^2+c_2+\a_2 f_1f_2+\b_2f_2f_3+\g_2f_1f_3+d_2f_1+e_2f_2+g_2f_3
    \\[2mm]
    (a_3-c_3)f_1^2+(b_3-c_3)f_2^2+c_3+\a_3 f_1f_2+\b_3f_2f_3+\g_3f_1f_3+d_3f_1+e_3f_2+g_3f_3
    \\[2mm]
  \end{array}
  \right.
\end{eqnarray}
From \eqref{qqqo3}, \eqref{qqqo4} we derive
\begin{eqnarray*}
  &&\big((a_1-c_1)f_1^2+(b_1-c_1)f_2^2+c_1+\a_1
  f_1f_2+\b_1f_2f_3+\g_1f_1f_3+d_1f_1+e_1f_2+g_1f_3\big)^2\\[2mm]
  +&&\big((a_2-c_2)f_1^2+(b_2-c_2)f_2^2+c_2+\a_2
  f_1f_2+\b_2f_2f_3+\g_2f_1f_3+d_2f_1+e_2f_2+g_2f_3\big)^2\\[2mm]
  +&&\big((a_3-c_3)f_1^2+(b_3-c_3)f_2^2+c_3+\a_3
  f_1f_2+\b_3f_2f_3+\g_3f_1f_3+d_3f_1+e_3f_2+g_3f_3\big)^2=1
\end{eqnarray*}
After some calculations we obtain the following
\begin{eqnarray*}
  &&\big(\|\ab\|^2+\|\cb\|^2-\|\G\|^2-2\langle\ab,\cb\rangle\big)f_1^4+\big(\|\bb\|^2+\|\cb\|^2-\|B\|^2-2\langle\bb,\cb\rangle\big)f_2^4\\[2mm]
  +&&\big(2\langle\ab,A\rangle-2\langle
  B,\G\rangle-2\langle\cb,A\rangle\big)f_1^3f_2+\big(2\langle\ab,\G\rangle-2\langle\cb,\G\rangle\big)f_1^3f_3\\[2mm]
  +&&\big(2\langle\ab,\db\rangle-2\langle\cb,\db\rangle-2\langle\G,\gbb\rangle\big)f_1^3+\big(2\langle\bb,A\rangle-2\langle
  B,\G\rangle-2\langle\cb,A\rangle\big)f_1f_2^3\\[2mm]
  +&&\big(2\langle\bb,B\rangle-2\langle\cb,B\rangle\big)f_2^3f_3+\big(2\langle\bb,\eb\rangle-2\langle\cb,\eb\rangle-2\langle B,\gbb\rangle\big)f_2^3\\
  +&&\big(2\|\cb\|^2+\|A\|^2-\|B\|^2-\|\G\|^2+2\langle\ab,\bb\rangle-2\langle\bb,\cb\rangle-2\langle\ab,\cb\rangle\big)f_1^2f_2^2\\[2mm]
  +&&\big(2\langle\ab,B\rangle+2\langle
  A,\G\rangle-2\langle\cb,B\rangle\big)f_1^2f_2f_3+\big(2\langle\ab,\eb\rangle+2\langle A,\db\rangle-2\langle\cb,\eb\rangle-2\langle
  B,\gbb\rangle\big)f_1^2f_2\\[2mm]
  +&&\big(2\langle\ab,\gbb\rangle+2\langle\G,\db\rangle-2\langle\cb,\gbb\rangle\big)f_1^2f_3+\big(\|\G\|^2+\|\db\|^2-2\|\cb\|^2-\|\gbb\|^2+2\langle\ab,\cb\rangle\big)f_1^2\\[2mm]
  +&&\big(2\langle\bb,\G\rangle+2\langle
  A,B\rangle-2\langle\cb,\G\rangle\big)f_1f_2^2f_3+\big(2\langle\bb,\db\rangle+2\langle
  A,\eb\rangle-2\langle\cb,\db\rangle-2\langle\G,\gbb\rangle\big)f_1f_2^2\\[2mm]
  +&&\big(2\langle\bb,\gbb\rangle+2\langle
  B,\eb\rangle-2\langle\cb,\gbb\rangle\big)f_2^2f_3+\big(\|B\|^2+\|\eb\|^2-2\|\cb\|^2-\|\gbb\|^2+2\langle\bb,\cb\rangle\big)f_2^2\\[2mm]
  +&&\big(2\langle A,\gbb\rangle+2\langle
  B,\db\rangle+2\langle\G,\eb\rangle\big)f_1f_2f_3+\big(2\langle\cb,A\rangle+2\langle
  B,\G\rangle+2\langle\db,\eb\rangle\big)f_1f_2\\[2mm]
  +&&\big(2\langle\cb,\G\rangle+2\langle\db,\gbb\rangle\big)
  f_1f_3+\big(2\langle\cb,B\rangle+2\langle\eb,\gbb\rangle\big)
  f_2f_3+\big(2\langle\cb,\db\rangle+2\langle\G,\gbb\rangle\big)f_1\\[2mm]
  +&&\big(2\langle\cb,\eb\rangle+2\langle B,\gbb\rangle\big)f_2+2\langle\cb,\gbb\rangle f_3+\|\cb\|^2+\|\gbb\|^2-1=0
\end{eqnarray*}
which is satisfied (i)--(vi).

  "if" part is obvious. This completes the proof.
 \end{proof}

In what follows, we are interested in the case when $\D_2=0$ in
\eqref{D3-0}. This means that $\D$ has a Haar state $\t$. Indeed,
using the equality \eqref{Haar} with $h=\t$ one gets
$$
(id\o\t)(\D(\s_i))=\sum_{j=1}^3b_{ji}\s_j=\t(\s_i)\id=0, \ \
i=1,2,3.
$$
Therefore, $b_{ji}=0$ for all $i,j\in\{1,2,3\}$. Hence, $\D$ has the
following form
\begin{equation}\label{D4-1}
\D(w_0\id+\wb\s)=w_0\id\otimes\id+\sum_{m,l=1}^3\langle\bx_{ml},\overline{\wb}\rangle\sigma_m\otimes\sigma_l,
\end{equation}
Then the corresponding quadratic operator $V$ has the form
\eqref{qqqo1} with constrains $\db=\eb=\gbb=0$. From Theorem
\ref{qqqo2} one immediately gets

\begin{cor}\label{qqqo-22}
  Let the operator $V$ given by \eqref{qqqo1} with $\db=\eb=\gbb=0$. Then
  $V({\mathbf{S}})\subset{\mathbf{S}}$ if
  and only if  the followings hold true
  \begin{enumerate}
\item[(i)] $\|\ab\|=1, \ \|\bb\|=1, \ \|\cb\|=1;$
\item[(ii)]$\|A\|=\|\ab-\bb\|, \ \|\Gamma\|=\|\ab-\cb\|, \
\|B\|=\|\bb-\cb\|;$
\item[(iii)] $\langle\ab,B\rangle+\langle A,\G
\rangle=0, \ \langle\bb,\G\rangle+\langle A,B \rangle=0, \
\langle\cb,A\rangle+\langle B,\G \rangle=0;$

\item[(iv)] $\langle\ab,A\rangle=0, \ \langle\ab,\G\rangle=0, \
\langle\bb,A\rangle=0, \ \langle\bb,B\rangle=0, \
\langle\cb,\G\rangle=0, \ \langle\cb,B\rangle=0$
   \end{enumerate}
where the vectors $\ab,\bb,\cb,A,B,\G$ are given in Theorem
\ref{qqqo2}.
 \end{cor}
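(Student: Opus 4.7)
The plan is to derive the corollary as a direct specialization of Theorem \ref{qqqo2}. Since the hypothesis $\db=\eb=\gbb=0$ only restricts three of the nine vectors appearing in that theorem, the strategy is simply to substitute these zero vectors into the six groups of conditions (i)--(vi) of Theorem \ref{qqqo2} and check that the surviving constraints repackage exactly into the four groups (i)--(iv) of the corollary. The equivalence with $V(\mathbf{S})\subset\mathbf{S}$ is then inherited from the theorem, so no additional algebraic identity needs to be proved from scratch.

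Concretely, I would process the six groups in order. Condition (i) of Theorem \ref{qqqo2}, namely $\|\ab\|^2+\|\db\|^2=\|\bb\|^2+\|\eb\|^2=\|\cb\|^2+\|\gbb\|^2=1$, collapses to $\|\ab\|=\|\bb\|=\|\cb\|=1$, which is (i) of the corollary. Condition (ii) is preserved verbatim and becomes (ii) of the corollary. Condition (iii), being $\langle\ab,\db\rangle=\langle\bb,\eb\rangle=\langle\cb,\gbb\rangle=0$, becomes $0=0$ and disappears.

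The main bookkeeping step is the handling of (iv) and (v) together. Under $\db=\eb=\gbb=0$, six of the nine equations in (v) become trivial and the remaining three read $\langle\cb,\G\rangle=0$, $\langle\cb,B\rangle=0$, $\langle\bb,A\rangle=0$. Feeding these into the three equalities of (iv) -- namely $\langle\ab,\G\rangle=\langle\cb,\G\rangle$, $\langle\bb,B\rangle=\langle\cb,B\rangle$, $\langle\ab,A\rangle=\langle\bb,A\rangle$ -- pins down all six inner products $\langle\ab,A\rangle$, $\langle\ab,\G\rangle$, $\langle\bb,A\rangle$, $\langle\bb,B\rangle$, $\langle\cb,\G\rangle$, $\langle\cb,B\rangle$ to zero, i.e.\ condition (iv) of the corollary. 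Finally, in (vi) of the theorem the third equation $\langle A,\gbb\rangle+\langle B,\db\rangle+\langle\G,\eb\rangle=0$ is automatic, while the other three, after using $\langle\cb,B\rangle=\langle\cb,\G\rangle=0$ just derived, simplify to $\langle\ab,B\rangle+\langle A,\G\rangle=0$, $\langle\bb,\G\rangle+\langle A,B\rangle=0$, $\langle\cb,A\rangle+\langle B,\G\rangle=0$, which is (iii) of the corollary.

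The reverse direction is obtained by reading the same chain of implications backwards: (i)--(iv) of the corollary together with $\db=\eb=\gbb=0$ plainly imply each of (i)--(vi) of the theorem, and hence $V(\mathbf{S})\subset\mathbf{S}$. I do not anticipate a serious obstacle; the only risk is a clerical one, namely accidentally dropping or keeping the wrong line of (v) after the substitution, which would produce a spuriously weaker or stronger list. To guard against this I would write out all nine equations of (v) explicitly with the zero vectors inserted, and only afterwards combine them with (iv) and (vi).
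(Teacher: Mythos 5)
Your proposal is correct and takes essentially the same route as the paper: the paper obtains Corollary \ref{qqqo-22} simply by specializing Theorem \ref{qqqo2} to $\db=\eb=\gbb=0$ (it states the corollary with no written proof beyond ``one immediately gets''), and your tracking of which equations in (i)--(vi) survive, vanish, or merge --- in particular combining the three surviving lines of (v) with (iv) to get the six vanishing inner products, and using $\langle\cb,B\rangle=\langle\cb,\G\rangle=0$ to simplify (vi) into (iii) --- is exactly the intended substitution, with both directions of the equivalence correctly accounted for.
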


Let us consider a symmetric quasi q.q.o. $\D$ with Haar state $\t$,
corresponding to \eqref{qqqo1}. Then according to \eqref{D4-1} the
operator $\D$ has the following
 form
 \begin{eqnarray*}
   \D(x)=w_0\id\o\id&+&a_1w_1\s_1\o\s_1+\frac{\a_1}{2}w_1\s_1\o\s_2+\frac{\g_1}{2}w_1\s_1\o\s_3\\
   &+&\frac{\a_1}{2}w_1\s_2\o\s_1+b_1w_1\s_2\o\s_2+\frac{\b_1}{2}w_1\s_2\o\s_3\\
   &+&\frac{\g_1}{2}w_1\s_3\o\s_1+\frac{\b_1}{2}w_1\s_3\o\s_2+c_1w_1\s_3\o\s_3\\
   &+&a_2w_2\s_1\o\s_1+\frac{\a_2}{2}w_2\s_1\o\s_2+\frac{\g_2}{2}w_2\s_1\o\s_3\\
   &+&\frac{\a_2}{2}w_2\s_2\o\s_1+b_2w_2\s_2\o\s_2+\frac{\b_2}{2}w_2\s_2\o\s_3\\
   &+&\frac{\g_2}{2}w_2\s_3\o\s_1+\frac{\b_2}{2}w_2\s_3\o\s_2+c_2w_2\s_3\o\s_3\\
   &+&a_3w_3\s_1\o\s_1+\frac{\a_3}{2}w_3\s_1\o\s_2+\frac{\g_3}{2}w_3\s_1\o\s_3\\
   &+&\frac{\a_3}{2}w_3\s_2\o\s_1+b_3w_3\s_2\o\s_2+\frac{\b_3}{2}w_3\s_2\o\s_3\\
   &+&\frac{\g_3}{2}w_3\s_3\o\s_1+\frac{\b_3}{2}w_3\s_3\o\s_2+c_3w_3\s_3\o\s_3
 \end{eqnarray*}
Calculating the last one, we obtain
\begin{eqnarray}\label{qqqo9}
  \D(x)=\left(%
\begin{array}{cccc}
  w_0+R & N-iP & N-iP & L-2iM-O \\
  N+iP & w_0-R & L+O & -N+iP \\
  N+iP & L+O & w_0-R & -N+iP \\
  L+2iM-O & -N-iP & -N-iP & w_0+R \\
\end{array}%
\right)
\end{eqnarray}
where
\begin{eqnarray*}
&&L=\langle \ab,\wb\rangle, \ \ \ M=\frac{1}{2}\langle
A,\wb\rangle,\ \ \ N=\frac{1}{2}\langle\G,\wb\rangle, \\[2mm]
&&O=\langle\bb,\wb\rangle, \ \ \ P=\frac{1}{2}\langle B,\wb\rangle,
\ \ \ R=\langle\cb,\wb\rangle.
\end{eqnarray*}

\begin{thm}\label{P-2q}
Let $\D:\bm_2(\bc)\to \bm_2(\bc)\o \bm_2(\bc)$ be a q-pure
symmetric quasi q.q.o. with Haar state $\t$. Then $\D$ is not
positive.
\end{thm}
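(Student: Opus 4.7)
The plan is to argue by contradiction: suppose $\D$ is a positive $q$-pure symmetric quasi q.q.o.\ with Haar state; the goal is to derive a contradiction with Corollary \ref{qqqo-22}(iii). My test probes are the three rank-one positive elements $x=\id+\wb\cdot\s$ for $\wb \in \{\ab,\bb,\cb\}$, each of which is positive because $\|\wb\|=1$ by Corollary \ref{qqqo-22}(i).

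The first step exploits two structural features of the matrix \eqref{qqqo9}: the $(2,3)$ principal $2\times 2$ submatrix has diagonal $w_0-R$ and off-diagonal $L+O$ (independent of $M,N,P$), while the $(1,4)$ principal submatrix, with diagonal $w_0+R$ and off-diagonal $L-O\mp 2iM$, becomes real-symmetric exactly when $M=\tfrac{1}{2}\langle A,\wb\rangle$ vanishes. By Corollary \ref{qqqo-22}(iv) we have $A\perp\ab$ and $A\perp\bb$, so $M=0$ for $\wb\in\{\ab,\bb\}$.

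The second step evaluates the positivity of these two blocks at $w_0=1$. For $\wb=\cb$, the factor $w_0-R$ vanishes and the $(2,3)$ block collapses to $\bigl(\begin{smallmatrix} 0 & L+O\\ L+O & 0\end{smallmatrix}\bigr)$, directly forcing $\langle\cb,\ab+\bb\rangle=0$. For $\wb=\ab$, positivity of the $(2,3)$ and $(1,4)$ blocks yields $\langle\ab,\bb+\cb\rangle\leq 0$ and $\langle\ab,\bb+\cb\rangle\geq 0$ respectively, hence $\langle\ab,\bb+\cb\rangle=0$; analogously $\langle\bb,\ab+\cb\rangle=0$ for $\wb=\bb$. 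These three linear equations in the pairwise inner products $\langle\ab,\bb\rangle,\langle\ab,\cb\rangle,\langle\bb,\cb\rangle$ force each to vanish, so $\{\ab,\bb,\cb\}$ is an orthonormal basis of $\br^3$.

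Corollary \ref{qqqo-22}(ii),(iv) then pins down $A=\pm\sqrt{2}\,\cb$, $B=\pm\sqrt{2}\,\ab$, and $\G=\pm\sqrt{2}\,\bb$ (each vector lies on the unique line orthogonal to its two designated partners, with norm $\sqrt{2}$ coming from $\|\ab-\bb\|=\|\ab-\cb\|=\|\bb-\cb\|=\sqrt{2}$). Substituting into the first identity of Corollary \ref{qqqo-22}(iii), $\langle\ab,B\rangle+\langle A,\G\rangle=0$, gives $\langle\ab,B\rangle=\pm\sqrt{2}$ while $\langle A,\G\rangle=\pm 2\langle\cb,\bb\rangle=0$, forcing $\pm\sqrt{2}=0$---a contradiction. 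I expect the main obstacle to be the bookkeeping: checking that $M,N,P$ really do vanish in the right combinations for each choice of $\wb$ (via Corollary \ref{qqqo-22}(iv)) so that both principal minors reduce to the claimed real-symmetric forms; once this is verified, the remaining steps are short linear-algebraic computations.
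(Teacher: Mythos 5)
Your proposal is correct and follows essentially the same route as the paper: the same contradiction setup, the same test elements $\id+\ab\s$, $\id+\bb\s$, $\id+\cb\s$, the same conclusion that $\ab,\bb,\cb$ are orthonormal with $A,B,\G$ proportional to $\cb,\ab,\bb$ of norm $\sqrt{2}$, and the same final clash with Corollary \ref{qqqo-22}(iii). The only difference is cosmetic: you extract the orthogonality relations from the $2\times 2$ principal minors of \eqref{qqqo9}, whereas the paper computes the full spectrum of the $4\times 4$ matrix and phrases the intermediate step as $\|A\|^2+\|\G\|^2=4$, etc., via condition (ii).
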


\begin{proof}
  Let us prove from the contrary. Assume that $\D$ is positive. This
  means that the matrix given by \eqref{qqqo9} should be positive,
  whenever $x$ is positive. The positivity of $x$ yields that $w_0, w_1, w_2,
  w_3$ are real numbers. In what follows, without loss of
  generality, we may assume that $w_0=1$, and therefore
  $\|\wb\|\leq1$. It is known that the positivity of the matrix
  $\D(x)$ is equivalent to the positivity of its eigenvalues, and
  it should be positive for any values of $\|\wb\|\leq1$.

We note that the $q$-purity of $\D$ implies that the conditions
(i)-(iv) of Corollary \ref{qqqo-22} are satisfied.

Let us take $x=\id+\ab\s$, then from \eqref{qqqo9} one gets
  \begin{eqnarray*}
    \D(x)=\left(%
\begin{array}{cccc}
  1+\langle\cb,\ab\rangle & -\frac{i}{2}\langle B,\ab\rangle & -\frac{i}{2}\langle B,\ab\rangle & 1-\langle\bb,\ab\rangle  \\
  \frac{i}{2}\langle B,\ab\rangle & 1-\langle\cb,\ab\rangle & 1+\langle\bb,\ab\rangle  & \frac{i}{2}\langle B,\ab\rangle \\
  \frac{i}{2}\langle B,\ab\rangle & 1+\langle\bb,\ab\rangle  & 1-\langle\cb,\ab\rangle & \frac{i}{2}\langle B,\ab\rangle \\
  1-\langle\bb,\ab\rangle  & -\frac{i}{2}\langle B,\ab\rangle & -\frac{i}{2}\langle B,\ab\rangle & 1+\langle\cb,\ab\rangle \\
\end{array}%
\right).
  \end{eqnarray*}
  A simple algebra shows us that all eigenvalues of $\D(x)$ can be
  written as follows
  \begin{eqnarray*}
    &&\l_1=-\langle\cb,\ab\rangle-\langle\bb,\ab\rangle\\
    &&\l_2=\langle\cb,\ab\rangle+\langle\bb,\ab\rangle\\
    &&\l_3=2+\sqrt{\langle\bb,\ab\rangle^2-2\langle\cb,\ab\rangle\langle\bb,\ab\rangle+\langle\cb,\ab\rangle^2+\langle
    B,\ab\rangle^2}\\
    &&\l_4=2-\sqrt{\langle\bb,\ab\rangle^2-2\langle\cb,\ab\rangle\langle\bb,\ab\rangle+\langle\cb,\ab\rangle^2+\langle
    B,\ab\rangle^2}.
  \end{eqnarray*}
  Now using (ii) of Corollary \ref{qqqo-22} we rewrite $\l_1, \l_2, \l_3, \l_4$ as
  follows
  \begin{eqnarray*}
    &&\l_1=-2+\frac{\|\G\|^2}{2}+\frac{\|A\|^2}{2}\\
    &&\l_2=2-\frac{\|\G\|^2}{2}-\frac{\|A\|^2}{2}\\
    &&\l_3=2+\frac{1}{2}\sqrt{\|A\|^4-2\|\G\|^2\|A\|^2+\|\G\|^4+\langle
    B,\ab\rangle^2}\\
    &&\l_4=2-\frac{1}{2}\sqrt{\|A\|^4-2\|\G\|^2\|A\|^2+\|\G\|^4+\langle
    B,\ab\rangle^2}.
  \end{eqnarray*}
  Knowing $\l_1\geq0, \l_2\geq0$ we have
\begin{eqnarray*}
  \|A\|^2+\|\G\|^2=4.
  \end{eqnarray*}

By considering elements $x=\id+\bb\s$, $x=\id+\cb\s$,
respectively, and using the similar argument one finds
  \begin{eqnarray*}
    \|B\|^2+\|A\|^2=4 \ \
    \|\G\|^2+\|B\|^2=4
  \end{eqnarray*} Therefore, we
  conclude that
  \begin{eqnarray*}
    \|A\|^2=2, \ \|B\|^2=2, \ \|\G\|^2=2.
  \end{eqnarray*}
  Hence, again taking into account (ii) of Corollary \ref{qqqo-22} we find that
  \begin{eqnarray*}
    \langle\ab,\bb\rangle=0, \ \langle\ab,\cb\rangle=0, \
    \langle\bb,\cb\rangle=0.
  \end{eqnarray*}
  This means that the vectors $\ab, \bb, \cb$ are linearly
  independent. Therefore, one can write
  \begin{eqnarray*}
    A=\eta_1\ab+\mu_1\bb+\tau_1\cb\\
    B=\eta_2\ab+\mu_2\bb+\tau_2\cb\\
    \G=\eta_3\ab+\mu_3\bb+\tau_3\cb
  \end{eqnarray*}
  where $\eta_i^2+\mu_i^2+\tau_i^2=2, \ i=\overline{1,3}.$

  From (iv) of Corollary \ref{qqqo-22} we find that
  \begin{eqnarray*}
    \eta_1=0, \ \eta_3=0, \ \mu_1=0, \ \mu_2=0, \ \tau_2=0, \ \tau_3=0.
  \end{eqnarray*}
  This implies that
  \begin{eqnarray*}
    A=\tau_1\cb, \ \ B=\eta_2\ab, \ \ \G=\mu_3\bb.
  \end{eqnarray*}
  Hence, from (iii) of Corollary \ref{qqqo-22} we have
  \begin{eqnarray*}
  \langle A,B\rangle+\langle\bb,\G\rangle=0 \ \ \Rightarrow \ \
  \mu_3=0\\
  \end{eqnarray*}
  which contradicts to $\mu_3\neq0$. This completes the proof.
\end{proof}

This theorem implies that $q$-pure symmetric quasi q.q.o. with Haar
state can not be q.q.o. Moreover, if one has pure quasi q.q.o., then
it cannot be positive. As we have seen in the previous section a
quasi q.q.o. with only "linear" term can be positive. But the last
theorem shows the difference between Theorem \ref{P-q}. Namely, if
one considers a quadratic operator $V$ which is linear (this
corresponds to the case of Theorem \ref{P-q}), then $q$-pure quasi
q.q.o. is positive. But Theorem \ref{P-2q} implies a different kind
of statement, i.e. if $V$ contains a nonlinear term, i.e. quadratic
term, then the $q$-purity of $\D$ does not imply its positivity.

\section{On dynamics of q-pure quasi quantum quadratic operator.}

In this section we are going to make some remarks on dynamics of
$q$-pure quasi q.q.o.

Let $\D$ be a $q$-pure quasi q.q.o. By $V$ we denote the
corresponding quadratic operator. Now we want to study the dynamics
of $V$.

\begin{prop}\label{qqqoP}
  Let $V$ be a quadratic operator corresponding to $q$-pure quasi q.q.o. with
  Haar state $\tau$. Then for any $\fb\in\mathbf{D}\setminus\mathbf{S}$ one
  has
  \begin{eqnarray*}
    \lim_{n\rightarrow\infty}{V^n(\fb)}=0.
  \end{eqnarray*}
\end{prop}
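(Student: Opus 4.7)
The plan is to exploit two features simultaneously: first, because $\D$ has the Haar state $\t$, the corresponding quadratic operator $V$ has no linear term and is therefore homogeneous of degree $2$; second, $q$-purity forces $V$ to preserve the unit sphere $\mathbf{S}$. Combining these yields $\|V(\fb)\|=\|\fb\|^2$ on all of $\mathbf{D}$, and iteration gives the stated super-exponential decay to $0$.

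Concretely, I would first recall from the derivation preceding Corollary \ref{qqqo-22} that the Haar condition $(id\o\t)\circ\D(\s_i)=\t(\s_i)\id=0$ forces $b_{ji}=0$ for all $i,j\in\{1,2,3\}$. Consequently the quadratic operator defined by \eqref{V} loses its linear part and reduces to
$$V(\fb)_k=\sum_{i,j=1}^{3} b_{ij,k}\, f_i f_j, \qquad k=1,2,3,$$
a homogeneous polynomial of degree $2$; in particular $V(\lambda\fb)=\lambda^2 V(\fb)$ for every scalar $\lambda$ and every $\fb\in\br^3$.

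Next I would use $q$-purity. By Lemma \ref{m2}(e) together with the definition of $q$-purity, one has $V(\mathbf{S})\subset\mathbf{S}$, i.e., $\|V(\hat{\fb})\|=1$ whenever $\|\hat{\fb}\|=1$. For any nonzero $\fb\in\mathbf{D}$, setting $\hat{\fb}=\fb/\|\fb\|\in\mathbf{S}$ and applying homogeneity gives
$$\|V(\fb)\|=\|\fb\|^2\,\|V(\hat{\fb})\|=\|\fb\|^2,$$
while $V(0)=0$ trivially. Iterating this identity yields $\|V^n(\fb)\|=\|\fb\|^{2^n}$, and for $\fb\in\mathbf{D}\setminus\mathbf{S}$ one has $\|\fb\|<1$, so $\|V^n(\fb)\|\to 0$, which is the required conclusion.

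There is no serious obstacle here: the essential observation is that the Haar state hypothesis is precisely what removes the linear part of $V$, so that $V$ becomes $2$-homogeneous; once this is combined with the sphere-preserving property forced by $q$-purity, the norm identity $\|V(\fb)\|=\|\fb\|^2$ is immediate and the conclusion follows by a one-line iteration. The only place a justification is needed is the vanishing of the coefficients $b_{ji}$ under the Haar condition, which is already carried out in the display just above \eqref{D4-1}.
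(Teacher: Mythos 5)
Your proof is correct and follows essentially the same route as the paper's: the Haar state removes the linear part of $V$, making it homogeneous of degree $2$; $q$-purity gives $V(\mathbf{S})\subset\mathbf{S}$; normalizing $\fb$ to the sphere then yields $\|V(\fb)\|=\|\fb\|^2$, and iteration gives $\|V^n(\fb)\|=\|\fb\|^{2^n}\to 0$. You are in fact slightly more explicit than the paper, which uses the $2$-homogeneity silently when writing $\|V(\fb/\|\fb\|)\|=\|V(\fb)\|/\|\fb\|^2$ and does not separately note the trivial case $\fb=0$.
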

\begin{proof}
  Let $\fb\in\mathbf{D}\setminus\mathbf{S}$ then one can see
  $\|\fb\|<1.$ Denote $\mathbf{g}=\frac{\fb}{\|\fb\|}$ then
  $\mathbf{g}\in\mathbf{S}$. Therefore using purity of $\D$ we
  conclude $V(\mathbf{g})\in\mathbf{S}.$ This means
  \begin{eqnarray*}
    1=\bigg\|V\bigg(\frac{\fb}{\|\fb\|}\bigg)\bigg\|=\frac{1}{\|\fb\|^2}\|V(\fb)\|.
  \end{eqnarray*}
  So
  \begin{eqnarray*}
    \|V(\fb)\|=\|\fb\|^2.
  \end{eqnarray*}
  Hence, we find
  \begin{eqnarray*}
    \|V^n(\fb)\|=\|\fb\|^{2^n}
  \end{eqnarray*}
  which implies $V^n(\fb)\rightarrow0$ as $n\rightarrow\infty$.
\end{proof}

\begin{cor}
  Let $V$ be as in Proposition \ref{qqqoP}, then any nonzero fixed
  point (if it exists) belongs to $\mathbf{S}$. Moreover,
  $(0,0,0)$ is unique fixed point in $\mathbf{D}\setminus\mathbf{S}$
\end{cor}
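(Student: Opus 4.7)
The plan is to reduce both assertions to the single norm identity $\|V(\fb)\|=\|\fb\|^2$ already established for $\fb\in\mathbf{D}\setminus\mathbf{S}$ inside the proof of Proposition \ref{qqqoP}. That identity is the only ingredient needed; everything else is an elementary observation about nonnegative real numbers.

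First I would suppose $\fb\in\mathbf{D}$ is a fixed point of $V$, i.e. $V(\fb)=\fb$. If $\fb\in\mathbf{S}$, there is nothing to prove, so assume $\fb\in\mathbf{D}\setminus\mathbf{S}$, which means $\|\fb\|<1$. Applying Proposition \ref{qqqoP} (or rather the intermediate identity established there) gives
\begin{equation*}
\|\fb\|=\|V(\fb)\|=\|\fb\|^{2},
\end{equation*}
so $\|\fb\|\in\{0,1\}$. Since $\|\fb\|<1$, the only possibility is $\|\fb\|=0$, i.e. $\fb=\mathbf{0}$. This simultaneously proves that a nonzero fixed point must lie on $\mathbf{S}$ and that $(0,0,0)$ is the only fixed point in $\mathbf{D}\setminus\mathbf{S}$. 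Finally, one checks directly from \eqref{qqqo1} with $\db=\eb=\gbb=0$ (the Haar state case) that $V(\mathbf{0})=\mathbf{0}$, so $(0,0,0)$ is indeed a fixed point.

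There is essentially no obstacle: the whole argument is a one line consequence of the norm identity $\|V(\fb)\|=\|\fb\|^2$, which follows from $q$-purity exactly as in Proposition \ref{qqqoP}. The only point that could in principle need attention is making sure that when $\|\fb\|=0$ we do not divide by zero in applying $V$; this is handled simply by verifying $V(\mathbf{0})=\mathbf{0}$ separately from the explicit form of $V$ in the Haar state case, where the linear coefficients $\db,\eb,\gbb$ vanish.
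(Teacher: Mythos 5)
Your proof is correct and matches the paper's intent: the corollary is stated there without proof as an immediate consequence of Proposition \ref{qqqoP}, and your argument via the norm identity $\|V(\fb)\|=\|\fb\|^{2}$ (equivalently, applying $V^{n}(\fb)\to 0$ to a fixed point) is exactly the intended one-line deduction. Your separate verification that $V(\mathbf{0})=\mathbf{0}$, using the homogeneity of $V$ in the Haar-state case, is a sensible touch that the paper leaves implicit.
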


Now to investigate dynamics of $V$ it remains to study it on
$\mathbf{S}$. Next examples show how the dynamics could be
unpredictable on $\mathbf{S}$.

{\bf 1.} Let us study the dynamics of the given operator $V_0$
given by \eqref{ex1}. We consider several cases.

Now assume $\fb\in \mathbf{S}$. Suppose that $f_1=0$. Then we have
\begin{eqnarray*}
  V_0(\fb)=(0,-1,0).
\end{eqnarray*}
Hence, $V^n_0(\fb)=(0,-1,0)$, for every $n\in \mathbb{N}$.

Suppose that $f_2=0$. Then
\begin{eqnarray*}
  V_0(f_1,0,f_3)=(0,f_1^2-f_3^2,2f_1f_3).
\end{eqnarray*}
So we have $V^k_0(\fb)\rightarrow(0,-1,0)$ as
$n\rightarrow\infty$.

Suppose that $f_3=0$. Then
\begin{eqnarray*}
  V_0(f_1,f_2,0)=(2f_1f_2,f_1^2-f_2^2,0)=\bigg(\pm 2f_1\sqrt{1-f_1^2},2f_1^2-1,0\bigg).
\end{eqnarray*}
To investigate the dynamics of $V_0$, let us consider the
following function
\begin{eqnarray*}
  g(x)=2x\sqrt{1-x^2}, \ |x|\leq1.
\end{eqnarray*}
For us it is enough to study the dynamics of $g(x)$. It is clear
that
\begin{eqnarray*}
  g[0,1]\subset[0,1], \ g[-1,0]\subset[-1,0].
\end{eqnarray*}
Since the function is odd it is sufficient to study the dynamics
of $g$ on $[0,1]$. Denote $h(x)=\sqrt{x}$. One can see that
\begin{eqnarray*}
  h^{-1}\big(g\big(h(x)\big)\big)=4x(1-x).
\end{eqnarray*}
This means $g(x)$ and $\ell(x)=4x(1-x)$ are conjugate on [0,1]. It
is known that the function $\ell(x)$ is the logistic function which
is chaotic. Hence, $g(x)$ is also chaotic. From this we conclude
that the behavior of $V_0$ on $\mathbf{S}$ with $f_3=0$ is chaotic.
Note that similar kind of dynamical system has been investigated in
\cite{BGLL,Mal2,Sz}.

{\bf 2.} Let
\begin{eqnarray*}
  \D_1(x)=w_0\id\o\id+\langle\mathbf{t},\wb\rangle(\s_1\o\s_1+\s_2\o\s_2+\s_3\o\s_3),
\end{eqnarray*}
then the corresponding quadratic operator has the following form
\begin{eqnarray*}
  V_1(f_1,f_2,f_3)=\left\{\begin{array}{lll}
    t_1\big(f_1^2+f_2^2+f_3^2\big) \\
    t_2\big(f_1^2+f_2^2+f_3^2\big) \\
    t_3\big(f_1^2+f_2^2+f_3^2\big) \\
  \end{array}
  \right.
\end{eqnarray*}
where $\|\mathbf{t}\|=1, \ \mathbf{t}=(t_1,t_2,t_3).$

One can see $V_1$ has only two fixed points which are $(0,0,0), \
(t_1,t_2,t_3)$. It is easy to see that
$V(\mathbf{S})=\{\mathbf{t}\}$, so $\D_1$ is $q$-pure quasi q.q.o.
Therefore, we conclude that
\begin{eqnarray*}
  \lim_{n\rightarrow\infty}{V_1^n(\fb)}=\left\{\begin{array}{lll}
   \mathbf{t}, \ \ \fb\in\mathbf{S} \\
   \mathbf{0}, \ \ \fb\in\mathbf{D}\setminus\mathbf{S}. \\
  \end{array}
  \right.
\end{eqnarray*}

\section{Conclusion}

In the present paper we studied quasi quantum quadratic operators
(q.q.o) acting on the algebra of $2\times 2$ matrices $\bm_2(\bc)$.
We have introduced a weaker condition, called $q$-purity, than
purity of the channel. To study $q$-pure channels, we have described
all trace-preserving quasi q.q.o. acting on $\bm_2(\bc)$, which
allowed us to describe all $q$-pure quadratic operators. Then we
prove that if a trace-preserving symmetric quasi q.q.o. such that
the corresponding quadratic operator is linear, then its $q$-purity
implies its positivity. Moreover, if a symmetric quasi q.q.o. has a
Haar state $\t$, then its corresponding quadratic operator is
nonlinear, and it is proved that such $q$-pure symmetric quasi
q.q.o. cannot be positive. Note that there are nontrivial q.q.o.
such that their corresponding quadratic operators are nonlinear
\cite{MA}. We think that such a result will allow to check whether a
given mapping from $\bm_2(\bc)$ to $\bm_2(\bc)\o\bm_2(\bc)$ is pure
or not. On the other hand, our study is related to construction of
pure quantm nonlinear channels. We should stress that nonlinear
channels appear in many branches of quantum information (see for
example \cite{BB,IOS,OV}). Moreover, one also established that
nonlinear dynamics associated with quasi pure q.q.o. may have
differen kind of dynamics, i.e. it may behave chaotically or
trivially, respectively.

\section*{Acknowledgement} The authors
acknowledges the MOHE grant FRGS11-022-0170 and the Junior Associate
scheme of the Abdus Salam International Centre for Theoretical
Physics, Trieste, Italy. Finally, the author also would like to
thank to an anonymous referee whose useful suggestions and comments
improve the content of the paper.


\begin{thebibliography}{92}

\bibitem{ACK} L.Accardi, D. Chruscinski, A. Kossakowski, T. Matsuoka and M.
Ohya, On classical and quantum liftings, \textit{Open. Syst. Info.
Dyn.} {\bf 17} (2010), 361-–386.

\bibitem{AO} L. Accardi, M. Ohya, Compound Channels, Transition Expectations, and
Liftings, \textit{Appl. Math. Optim.} (1999) {\bf 39} 33-–59.

\bibitem{BGLL}  F. Balibrea, J.L. Guirao, M. Lampart, J. Llibre,
Dynamics of a Lotka-Volterra map, \textit{Fundamenta Math.} {\bf
191} (2006) 265--279.

\bibitem{BB} C.H. Bennett, G. Brassard, C. Cr\'{e}peau, R. Jozsa, A. Peres,
W.K. Wootters, Teleporting an unknown quantum state via Dual
Classical and Einstein-Podolsky-Rosen channels, \textit{Phys. Rev.
Lett.} {\bf 70} (1993) 1895--1899.

\bibitem{Be} S. N. Bernstein, The solution of a mathematical problem
related to the theory of heredity, {\it Uchen. Zapiski NI Kaf.
Ukr. Otd. Mat.} 1924, no. 1, 83--115. (Russian)


\bibitem{BR} O. Bratteli, D. W. Robertson, {\it Operator algebras and quantum
statistical mechanics.} I, Springer, New York-�Heidelberg-�Berlin
1979.

\bibitem{D} A. Dohtani, Occurrence of chaos in
higher-dimensional discrete-time systems,\textit{SIAM J.Appl. Math.}
\textbf{52} (1992) 1707-�1721.

\bibitem{GM1} N. N. Ganikhodzhaev, F. M. Mukhamedov, On quantum quadratic
stochastic processes, and some ergodic theorems for such processes,
{\it Uzb. Matem. Zh.} 1997, no. 3, 8�-20. (Russian)

\bibitem{GM2} N.N. Ganikhodzhaev, F. M. Mukhamedov, Ergodic properties
of quantum quadratic stochastic processes, {\it  Izv. Math.} {\bf
65} (2000), 873-�890.

\bibitem{FG} M.E. Fisher, B.S. Goh, Stability in a class of discrete-time
models of interacting populations, \textit{J. Math. Biol.}
\textbf{4} (1977) 265�-274.

\bibitem{FS} U. Franz, A. Skalski, On ergodic properties of convolution
operators associated with compact quantum groups, {\it Colloq.
Math.} {\bf 113} (2008), 13--23.


\bibitem{HJ} J. Hofbauer, V. Hutson, W. Jansen, Coexistence for systems
governed by difference equations of Lotka�Volterra type, \textit{J.
Math. Biol.} \textbf{25} (1987) 553-�570.

\bibitem{HS}  J. Hofbauer, K. Sigmund, \textit{Evolutionary Games and Population Dynamics},
Cambridge University
Press, Cambridge, 1998.

\bibitem{IOS} K. Inoue, M. Ohya, H. Suyari,
Characterization of quantum teleportation processes by nonlinear
quantum channel and quantum mutual entropy, \textit{Physica D} {\bf
120} (1998) 117--124.

\bibitem{K} H. Kesten, Quadratic transformations: a model for population
growth. I, II, {\it Adv. in Appl. Probab.} 1970, no. 2, 1-�82,
179-�228.



\bibitem{L1} Yu. I. Lyubich, Basic concepts and theorems of the evolutionary
genetics of free populations, {\it Russian Math. Surveys} {\bf
26}(1971).

\bibitem{L2} Yu. I. Lyubich, {\it Mathematical structures in population genetics},
 Springer, Berlin 1992.


\bibitem{MM0} W.A. Majewski, M. Marciniak, On nonlinear Koopman's construction, {\it Rep. Math. Phys.} {\bf 40} (1997) 501-�508.





 \bibitem{Ma} V. M. Maksimov, Cubic stochastic matrices and their probability
interpretations, {\it Theory Probab. Appl.} {\bf 41} (1996), 55-�69.

\bibitem{Mal2} P. Mali\u{c}k\'{y},
Interior periodic points of a Lotka-Volterra map. \textit{J.
Difference Equ. Appl.} {\bf 18}(2012), 553--567.

\bibitem{M2} F.M. Mukhamedov, On ergodic properties of discrete quadratic
dynamical system on $C^*$-algebras.  {\it Method of Funct. Anal. and
Topology}, {\bf 7}(2001), No.1, 63--75.

\bibitem{M3} F.M. Mukhamedov, On decomposition of quantum quadratic
stochastic processes into layer-Markov processes def�ned on von
Neumann algebras, {\it Izvestiya Math.} {\bf 68}(2004), 1009--1024.

\bibitem{MA2} F. Mukhamedov, A. Abduganiev, On description of
bistochastic Kadison-Schwarz operators on $M_2(\mathbb{{C}})$,
\textit{Open Systems \& Infor. Dynam.} {\bf 17}(2010), 245--253

\bibitem{MA} F. Mukhamedov, A. Abduganiev, On Kadison-Schwarz type quantum quadratic operators on
$M_2(C)$, \textit{Abst. Appl. Anal.} {\bf 2013}(2013), Article ID
278606, 9 p.


\bibitem{MATA} F. Mukhamedov, H. Akin, S. Temir, A. Abduganiev, On quantum
quadrtic operators on $M_2(\mathbb{C})$ and their dynamics,
\textit{Jour. Math. Anal. Appl.} {\bf 376}(2011) 641--655.

\bibitem{N} M.A. Nielsen, I.L. Chuang, {\it Quantum Computation and Quantum Information},
Cambridge University
Press, Cambridge, 2000.

\bibitem{OP} M. Ohya, D. Petz, \textit{Quantum Entropy and Its Use},
 Springer, Berlin 1993.

\bibitem{OV} M. Ohya, I. Volovich, \textit{Mathematical foundations of quantum information and computation and
its applications to nano- and bio-systems}, Springer, New York,
2011.

\bibitem{Paul} V. Paulsen, \textit{Completely Bounded Maps and Operator Algebras}, Cambridge University Press,
2003.

\bibitem{PL} M. Plank, V. Losert,
Hamiltonian structures for the n-dimensional Lotka-Volterra
equations, \textit{J. Math. Phys.} {\bf 36} (1995) 3520--3543.


\bibitem{RSW} M.B. Ruskai, S. Szarek, E. Werner, An analysis of completely positive
trace-preserving maps on $M_2$, {\it Lin. Alg. Appl.} {\bf 347}
(2002) 159�-187.

\bibitem{S} P.M. Soltan, Quantum $SO(3)$ groups and quantum group actions on $M_2$,
 \textit{J. Noncommut. Geom.} {\bf  4}  (2010),  1--28.

\bibitem{Sz} G. Swirszcz, On a certain map of a triangle, \textit{Fund. Math.}
{\bf 155} (1998) 45--57.



\bibitem{UR} F.E. Udwadia, N. Raju, Some global properties of a pair of
coupled maps: quasi-symmetry, periodicity and syncronicity,
\textit{Physica D} \textbf{111} (1998) 16-�26.

\bibitem{U} S. M. Ulam, {\it A collection of mathematical problems}, Interscience,
New York�London 1960.

\bibitem{V} S. S. Vallander, On the limit behaviour of iteration sequences
of certain quadratic transformations {\it Soviet Math. Dokl.} {\bf
13} (1972), 123�126.


\bibitem{W} S.L. Woronowicz, Compact matrix pseudogroups. {\it Comm. Math. Phys.}
{\bf 111} (1987), 613-�665.

\end{thebibliography}
\end{document}